\documentclass[12pt]{article}

\usepackage[margin=1in]{geometry}
\setlength{\parindent}{12pt} 
\setlength{\parskip}{5pt} 

\usepackage{amsmath,amsthm,amssymb}
\usepackage{mathtools}
\usepackage{amscd,keyval}

\usepackage{authblk}
\usepackage{fullpage} 

\usepackage[hyperindex,breaklinks]{hyperref}
\hypersetup{colorlinks,linkcolor={blue},citecolor={blue},urlcolor={red}}

\theoremstyle{definition}
\newtheorem{definition}{Definition}[section]

\newtheorem{proposition}[definition]{Proposition}

\theoremstyle{plain}
\newtheorem{theorem}[definition]{Theorem}
\newtheorem{lemma}[definition]{Lemma}

\theoremstyle{remark}
\newtheorem*{remark}{Remark}

\usepackage{microtype}
\begin{document}

\title{A Global Geometric Approach to Parallel Transport of Strings in Gauge Theory}
\author{LI Zimu\thanks{Email: e0267881@u.nus.edu}}
\affil{Department of Mathematics \\
 National University of Singapore, Singapore}
\date{}
\maketitle

\begin{abstract}
     This paper is motivated by recent developments of higher gauge theory. Different from its style of using higher category theory, we try to describe the concept of higher parallel transport within setting of classical principal bundle theory. From this perspective, we obtain a global geometric proof on a generalized 3-dimensional non-abelian Stokes' theorem related to parallel transport on surfaces. It can be naturally extended to four dimension when the underlying crossed module is replaced by crossed 2-module. This 4-dimensional Stokes' theorem yields a global formula for volume holonomy, and also guarantees its gauge invariance. In the process, we also find composition formulas for parallel transport on volumes which have been seen in the case of surfaces.
\end{abstract}


\section*{Introduction}

It has been quite a long time for parallel transport along paths attracting interests from both mathematics and physics. It reveals significant geometric information of connection and curvature on principal bundle, and in gauge theory, it is used to describe the interaction between elementary particles and gauge boson fields. Now, it is also of interest to know could this concept be extended from paths to surfaces in high dimension. Such an extension was thought impossible in non-abelian case during a past period of time. However, problems appearing in physics, like understanding the WZW model \cite{2002RvMaP..14.1281G}, BF theory \cite{1999CMaPh.204..493C} \cite{2008JMP....49c2503G} and flux of magnetic monopoles \cite{2014arXiv1410.6938P}, urge and also promote the establishment of non-abelian higher gauge theory. It is now clear that by replacing common structure group $G$ on principal bundle with the so-called crossed module $(G,H,\tau,\alpha)$, one can deal with non-abelian higher parallel transport (see \cite{2004JMP....45.3949G} and \cite{2004hep.th...12325B} for more details). The latest framework of higher gauge theory was established in \cite{2008arXiv0802.0663S}, \cite{2008arXiv0808.1923S}, \cite{2011arXiv1103.4815N} and \cite{2017arXiv170408542W}. By using tools from category theory, it is natural to categorify a principal bundle as a functor between certain groupoids. Further generalization and abstraction give rise to the notion of principal 2-bundle, and it is now identified as the right place to define higher connection, curvature and parallel transport on surfaces. Furthermore, if crossed module is replaced by crossed 2-module $(G,H,L,\triangleright_H,\triangleright_L,\{-,-\})$, which was first introduced by Conduché in \cite{1984C} as a model for 3-types in homotopy theory, in principle we will touch the case of parallel transport on volumes. Relevant study in this area has already been carried out. On one hand, crossed 2-module draws attention to the research on Gray 3-groupoid and 3-curvature in mathematics \cite{2011arXiv0907.2566FP} \cite{2014JMP....55d3506W}. On the other hand, higher 3-gauge theory also appears in the study of physics \cite{2014LMaPh.104.1147S} \cite{2019arXiv190407566R}. 

\subsection*{Outline and Main Results of this Paper} 

In this article, we wish to investigate a simplified case of global higher parallel transport in terms of classical principal bundle theory. We also proceed in the non-abelian situation, so fake curvature $B$, crossed module $(G,H,\alpha,\tau)$ and their upgraded version: fake 2-curvature $C$, crossed 2-module $(G,H,L,\triangleright_H,\triangleright_L,\{-,-\})$ are necessary. For the convenience of the reader, we briefly review all these concepts in Section \ref{2} and Section \ref{3} respectively. Besides, we formulate another simple but significant notion called standard lift. It is the foundation for our global approach. Basically, in 2-dimensional case a standard lift $\Tilde{\Sigma}$ is a horizontal lift, induced from a connection $A$, of some smooth homotopy $\Sigma$ (or simply a bigon) between smooth paths. Previous study on higher gauge theory requires bigons to be homotopies between paths with fixed endpoints (i.e., $\Sigma(s,t)$ is assigned to a point when $t = 0$ or $1$), since this requirement easily fits in the underlying 2-category structure (see \cite{2014arXiv1410.6938P} and \cite{2017arXiv170408542W} for more details). In this article, bigons with free edges are allowed, so they will be called squares from now on and accordingly we formulate other conditions to make each horizontal lift standard (see Definition \ref{2-4}). Standard lifts are also defined in 3- and 4-dimensional cases for further use (see Definition \ref{2-5} and Definition \ref{3-4}). To be specific, this paper begins with some preliminaries in Section \ref{1}: we develop the notion of parallel transport bundle which serves as the geometric background of standard lifts. This notion is motivated by constructions in \cite{1999CMaPh.204..493C} and \cite{2010RvMaP..22.1033C}, where the entire collection of all horizontal curves lifted from a connection $A$ is discussed. We abstract its essential properties to define parallel transport bundle $\mathcal{P}_{tra}(P)$ associated with any principal bundle $P$ without connection, and it turns out that this formulation is equivalent to connection on principal bundle.   

Then in Section \ref{2}, we rewrite the surface holonomy formula, which is originally a surface-ordered exponential evaluated on bigons contained in local coordinate charts (see \cite{2008arXiv0802.0663S}), by means of standard lifts. This rewritten formula is valid for squares with free edges. Additionally, it is a global formula since squares need not be defined locally, and hence the gauge invariance of this surface holonomy is almost immediate. Parallel transport on squares (Theorem \ref{2-16}) considered in this paper is defined as an analogy of parallel transport along paths. We use the notion of parallel transport bundle and crossed module to establish its geometric background. Based on some consideration from physics, we would like to call it parallel transport of strings: in the context of classical gauge theory, a gauge boson field is treated mathematically as a connection $A$ on a principal bundle $P$ over the spacetime $M$. Thus, when talking about interactions between moving particles and gauge boson fields, parallel transport along paths can come in handy, where paths are thought as the mathematical abstraction of trajectories or worldlines of particles moving in spacetime (see \cite{2017MATHGAUGE} for more historical remarks). Analogously, interactions between strings and gauge boson fields can be understood as parallel transports on squares, where squares are viewed as worldsheets swept out by free strings in spacetime.

One of our main results here is a generalized non-abelian Stokes's theorem presented in Section \ref{2}. In higher gauge theory, it is well known that parallel transports are thin-homotopy invariants. Thin-homotopy means that, for instance in three dimension, the differential of a concerned homotopy $\Theta$ between bigons $\Sigma_0$ and $\Sigma_1$ has at most rank $2$ at each point in its domain $I \times I \times I$ (see \cite{CP94} and \cite{2008arXiv0802.0663S} for more details). Any reparametrization of a bigon induces a thin-homotopy. If two geometric objects (paths or bigons) are thin-homotopic, then their parallel transports must be identical. In  \cite{2008arXiv0802.0663S}, Schreiber and Waldorf also use the thin-homotopy invariance as the base point to encode parallel transports into functors. Since the dependence of surface holonomy on a smooth homotopy of bigons is controlled by the relationship between higher connection (fake curvature $B$) and higher curvature (2-curvature $F_B$), a kind of non-abelian Stokes's theorem in three dimension is necessary. Generally, in the established higher gauge theory, there are lots of ways accessing the proof (see for instance \cite{2010arXiv0710.4310F}, \cite{FP2011}, \cite{2015JGP....95...28S} and \cite{2018arXiv181110060V}). Since we are considering squares swept out by free strings, the involved non-abelian Stokes's formula needs some modification. It is written as (notations are explained in Theorem \ref{2-9}):
\begin{align}
    \Big( \mathcal{P} \exp{\int_s \int_t B_r(\partial_s, \partial_t)} \Big)^{-1} \cdot \mathcal{P} \exp{\int_s \int_t B_0(\partial_s, \partial_t)}  = \mathcal{P} \exp{\int_0^r \int_s Ad_{h_r^{-1}} \int_t F_B(\partial_r, \partial_s, \partial_t) }. \notag
\end{align}
Our assumptions also give us a new and easy proof technically: it is global (i.e., we do not need to restrict ourselves to any local trivialization), and it is only based on a simpler version of non-abelian Stokes's theorem in two dimension. The reasoning process also resembles a mathematical induction, and it can be naturally applied to  $4$-dimensional case in Section \ref{3}. 

When discussing properties of surface holonomy, we obtain a horizontal composition formula: $tra(\Sigma_2 \circ \Sigma_1) = tra(\Sigma_1) \cdot tra(\Sigma_2)$ (see Proposition \ref{2-8}$(c)$). It is different from that in previous work derived from category theory (see for instance \cite{2015JGP....95...28S} and \cite{2018arXiv181110060V}). We realize that the reason is due to some choices under local trivializations, so at the end of Section \ref{2}, we restrict our attention to local computations of surface holonomy. We want to show the correlation between our prospective and some basic aspects of the established higher gauge theory and give our approach more substance through explicit computations.

The last section (Section \ref{3}) is a tentative exploration on parallel transport of cubes. Any cube $\Theta$ concerned here is essentially a smooth homotopy between squares $\Sigma_0$ and $\Sigma_1$ with appropriate conditions (see Definition \ref{3-4}). In the paper, cubes are also allowed to have free edges (cf. \cite{2011arXiv0907.2566FP} and \cite{2014JMP....55d3506W}). As in the case of common curvature $F_A$, higher Bianchi identity on 2-curvature $F_B$, proved by Martins and Picken in \cite{2010arXiv0710.4310F},  tells us that $dF_B + A \wedge_\alpha F_B = 0$. Thus, in order to get access to the notion of 3-curvature, crossed module should be substituted by crossed 2-module. There are comprehensive expositions on these concepts in \cite{2011arXiv0907.2566FP} and \cite{2014JMP....55d3506W}. To make this paper self-contained, we also briefly review them in the beginning of Section \ref{3}. The presentation of this section follows the style of Section \ref{2}: 3- and 4-dimensional standard lifts are defined after the review and used in the proof of $4$-dimensional Stokes' formula (notations are explained in Theorem \ref{3-6}):
\begin{align}
    \bigg( \mathcal{P}  \exp \int_r \int_s \Big( h_{q,r}(s)^{-1} \triangleright'  \big( \int_t C_q \rvert_{rst} - \{\int_t B_q \rvert_{rt}, \int_t B_q \rvert_{st} \} & \big) \Big) \bigg)^{-1} \notag \\
    \cdot \mathcal{P} \exp \int_r \int_s \Big( h_{0,r}(s)^{-1} \triangleright'  \big( \int_t C_0 \rvert_{rst} -  \{\int_t B_0 \rvert_{rt}, & \int_t B_0 \rvert_{st} \}  \big) \Big)  \notag \\
    = \mathcal{P} \exp \int_0^q \int_r Ad_{(l_q(r))^{-1}} \int_s h_{q,r}(s)^{-1} \triangleright' \Big( \int_t & \big( (F_C)_{qrst} - \{ B \wedge B \}_{qrst} \big) \Big). \notag
\end{align}
Based on this formula, we try to define volume holonomy and discuss its basic properties. It should be noted that in \cite{2011arXiv0907.2566FP}, Martins and Picken proposed a local $4$-dimensional Stokes' formula when cubes or 3-paths in their language are pinned at endpoints (i.e., $\Theta(r,s,t)$ is assigned to a point when $t = 0$ or $1$). As in the previous case of squares, this assumption is necessary in categorifying 3-dimensional holonomy by Gray 3-groupoid theory. Arguments in this paper do not depend on that assumption, but for any cube $\Theta$, other restrictions in our setting force $\Theta(r,i,t)$ and $\Theta(r,s,i)$ to be 2-dimensional thin-homotopies when $i=0$ or $1$. This means that, in terms of physics these cubes generally fails to represent trajectories of free branes in spacetime. We expect that further development on higher gauge theory can solve this problem completely.

In short, this paper is organized into three sections:
\begin{enumerate}
    \item[$\bullet$] Section \ref{1} develops the concept of parallel transport bundle $\mathcal{P}_{tra}(P)$. The equivalence between parallel transport bundle and connection is given in Theorem \ref{1-2} and Theorem \ref{1-5}. Theorem \ref{1-4} deals with the non-abelian Stokes's formula in two dimension.
    
    \item[$\bullet$] Section \ref{2} includes the definition of standard lifts (Definition \ref{2-4} and Definition \ref{2-5}), reformulations of surface holonomy (Definition \ref{2-6}) and the generalized non-abelian Stokes's theorem (Theorem \ref{2-9}). The notion of higher parallel transport of strings is given after Theorem \ref{2-16}. At the end of this section, we discuss local calculations on surface holonomy
    
    \item[$\bullet$] Section \ref{3} is divided into two parts: the first part is devoted to the discussion of $4$-dimensional Stokes' formula (Theorem \ref{3-6}). The second part introduces the concept of volume holonomy (Definition \ref{3-7}) and its basic properties (Proposition \ref{3-9}).
\end{enumerate}

\newpage

\subsection*{Basic Notation and Convention}

Let us make the following notation and convention in the beginning of this paper: the capital letter $P$ always denotes a principal $G$ bundle over a base manifold $M$. The space of all smooth paths in $M$ is written as $\mathcal{P}(M)$. A \textbf{smooth family of curves} $\gamma_x$ refers to a smooth map $\phi$ from $I^n \times I$ to its target manifold with $\gamma_x(t)$ defined as $\phi(x,t)$ for each $x \in I^n$, where $I$ the unit interval. When $n = 1$, we prefer the notation $\Sigma$ instead of $\phi$ and call it a square from the curve $\Sigma_0$ to $\Sigma_1$. 

A square $\Sigma$ is said to be a 2-dimensional \textbf{thin-homotopy} if the rank of its differential $d\Sigma$ is at most $1$ at any point in its domain. In this case, paths $\Sigma_0$ and $\Sigma_1$ are said to be thin-homotopic. Given any smooth path $\gamma_1$ with a reparametrization $\gamma_2 \vcentcolon = \gamma \circ f$, where $f$ is an orientation-preserving isomorphic on $I$, it is easy to prove that the straight-line homotopy between $\gamma_1$ and $\gamma_2$ is a thin-homotopy. Thus 2-dimensional thin-homotopy can be thought as a generalization of paths reparametrization and the image of any thin-homotopy $\Sigma$, in the sense of physics, can be viewed as a little perturbation of $\Sigma_0$ (see \cite{CP94}, \cite{2000math......7053M}, \cite{2004hep.th...12325B} and \cite{2008arXiv0802.0663S} for more details).

The ordinary differential equation $dR_{g(t)} f(t) = \Dot{g}(t)$ of smooth maps $g: I \rightarrow G$ and $f: I \rightarrow \mathfrak{g}$ by the right multiplication action $R_g$, is constantly mentioned in the following context. Suppose $g$ is the solution with $g(0) = e_G$, then it is common to denote $g(t)$ by the notation $\mathcal{P} \exp{ \int_0^t f}$. This is the so-called \textbf{path-ordered exponential} of $f$ (see Chapter $5$ of \cite{2017MATHGAUGE} for more details). Note that for a general non-abelian Lie group, this notation may not be thought as the exponential of some integral.

Path-ordered exponential will be used intensively to represent parallel transport in the following context. Suppose $P$ is a principal bundle with a connection one form $A$, or equivalently an Ehresmann horizontal distribution. For any smooth path $\gamma$ in the base manifold $M$, its parallel transport can be obtained by choosing a horizontal lift $\Tilde{\gamma}$: that is, a smooth curve in $P$ lifted from $\gamma$ such that $\Dot{\Tilde{\gamma}}$ is horizontal with respect to $A$. More details will be explained later and can also be found in \cite{2017MATHGAUGE}. Following a similar way, a \textbf{horizontal lift of a square $\Sigma$} is defined as a smooth map $\Tilde{\Sigma}: I \times I \rightarrow P$ such that for each $s \in I$, $\Tilde{\Sigma}_s$ is a horizontal lift of the curve $\Sigma_s$. Just like the case of paths, one can be convinced that for any smooth curve $\eta(s)$ lifted from $\Sigma(s,0)$, there exists a unique horizontal lift $\Tilde{\Sigma}$ of $\Sigma$ with $\Tilde{\Sigma}(s,0) = \eta(s)$ as a sort of initial condition. 


\section{Parallel Transport Bundle}\label{1}

The first section is intended to motivate our investigation of parallel transport. We formulate the concept of parallel transport bundle and discuss its relationship with connection and parallel transport along paths. In Section \ref{2}, this parallel transport bundle is used as the central geometric object in describing parallel transport of string. 

\medskip

\begin{definition}[Parallel Transport Bundle]\label{1-1}
Let $\mathcal{P}_{tra}(P)$ be a collection of smooth curves in $P$ with the natural projection $\Tilde{\pi}: \mathcal{P}_{tra}P \rightarrow \mathcal{P}(M)$.  Elements in $\mathcal{P}_{tra}(P)$ are called curves or lifts. The collection $\mathcal{P}_{tra}(P)$ is said to be a \textbf{parallel transport bundle} associated with $P$ if the following conditions hold: 
\begin{enumerate}
    \item[(a)] Let $\gamma$ be an arbitrary path in $M$ with $\gamma(0) = p$. Then for any point $x_p \in \pi^{-1}(p)$, there exists a lift of $\gamma$ starting from $x_p$.
    
    \item[(b)] Given any lift $\Tilde{\gamma} \in \mathcal{P}_{tra}(P)$, its tangent vector field $\Dot{\Tilde{\gamma}}$ is nowhere vertical in $TP$.
    
    \item[(c)] Suppose both of $\Tilde{\gamma}_1$ and $\Tilde{\gamma}_2$ are lifted from $\gamma$. If $\Tilde{\gamma}_1(0) = \Tilde{\gamma}_2(0) \cdot g$ for some $g \in G$ by the structure group action, then $\Tilde{\gamma}_1(t) = \Tilde{\gamma}_2(t) \cdot g$, for all $t \in I$. (Thus lifts of $\gamma$ is uniquely determined up to their starting points in $P$.)
    
    \item[(d)] Suppose two paths $\gamma_1$ and $\gamma_2$ are smoothly compatible, which means their path composition, denoted by $\gamma_2 \circ \gamma_1$, is still smooth. Let $\Tilde{\gamma}_1$ and $\Tilde{\gamma}_2$ be two lifts of them respectively with $\Tilde{\gamma}_1(1) = \Tilde{\gamma}_2(0)$. Then the composition $\Tilde{\gamma}_2 \circ \Tilde{\gamma}_1$ is lifted from $\gamma_2 \circ \gamma_1$ into $\mathcal{P}_{tra}(P)$ starting from $\Tilde{\gamma}_1(0)$.
    
    \item[(e)] If two paths with the same endpoints in $M$ are thin-homotopic, then any two lifts of these paths starting from the same point in $P$ will terminate at the same point.
    
    \item[(f)] For any local section $s: U \rightarrow P$ of bundle $P$, let $s_\star: \mathcal{P}_U(M) \rightarrow \mathcal{P}_{tra}(P)$ be the map defined on the collection $\mathcal{P}_U(M)$ of all smooth paths starting from $U$ by assigning $s_\star(\gamma)$ to $\Tilde{\gamma}$ with $\Tilde{\gamma}(0) = s(\gamma(0))$. All these local sections $s_\star$ of $\Tilde{\pi}$ are required to be smooth in the following sense: for any smooth family of paths $\phi: I^n \times I \rightarrow M$, the composition $s_\star(\phi(x))(t)$ is smooth from $I^n \times I$ to $P$.
\end{enumerate}
\end{definition}

\begin{remark}
These conditions make $\mathcal{P}_{tra}(P)$ into a principal $G$ bundle, as already indicated by its name. Any local trivialization $(U,\Phi)$ associated with a section $s$ of $P$ induces a local trivialization for $\mathcal{P}_{tra}(P)$ by defining $\Tilde{\gamma} \mapsto (\gamma,g)$, where $g \in G$ is the element such that $s(\gamma(0)) \cdot g = \Tilde{\gamma}(0)$.
\end{remark}

In the study of gauge theory, an equivalence between the so-called parallel transport functor and principal bundle with connection has been obtained by means of category theory (see \cite{2007arXiv0705.0452S}). We try to provide a simplified version in Theorem \ref{1-2} and Theorem \ref{1-5} on the equivalence between parallel transport bundle and connection on a fixed principal bundle $P$, since it is useful for our subsequent discussion. We just sketch the proof in Theorem \ref{1-2} because the details can be found in previous work \cite{1991IJTP...30.1171B} \cite{2000math......7053M} \cite{2004hep.th...12325B} \cite{2007arXiv0705.0452S}. It should be noted that local trivializations are unnecessary in most discussions presented in this paper, and the benefit is shown in the next section. 

\begin{theorem}\label{1-2}
Let $P$ be a principal $G$ bundle. Any parallel transport bundle  $\mathcal{P}_{tra}(P)$ associated with $P$ induces a connection $A$ on $P$. In fact, any curve contained in $\mathcal{P}_{tra}(P)$ is horizontal with respect to the connection $A$. 
\end{theorem}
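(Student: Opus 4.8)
The plan is to recover the connection by differentiating the lift operation at the initial time, thereby producing the Ehresmann horizontal distribution $H \subset TP$ directly and, equivalently, a connection $1$-form $A$. I would first fix a local section $s \colon U \to P$ with its induced trivialization $P|_U \cong U \times G$. By axiom (f) the map $s_\star$ assigns to each path $\gamma$ starting in $U$ a smooth lift $\widetilde{\gamma}$ with $\widetilde{\gamma}(0) = s(\gamma(0))$; writing this lift in the trivialization as $t \mapsto (\gamma(t), g_\gamma(t))$ with $g_\gamma(0) = e_G$, the equivariance axiom (c) shows that the lift starting from any $(\gamma(0), g_0)$ is $t \mapsto (\gamma(t), g_\gamma(t)\, g_0)$. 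Thus, in the trivialization, every element of $\mathcal{P}_{tra}(P)$ over $U$ is determined by the single $G$-valued function $g_\gamma$, and the whole problem reduces to understanding the first-order behaviour of the assignment $\gamma \mapsto g_\gamma$.

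Accordingly, I would define the local connection form $\mathcal{A} \in \Omega^1(U,\mathfrak{g})$ by
\[
\mathcal{A}_p(v) \vcentcolon= -\frac{d}{dt}\Big|_{t=0} g_\gamma(t),
\qquad v = \dot\gamma(0),\ \gamma(0)=p,
\]
and then set $A$ on $P|_U$ by the usual formula $A = \mathrm{Ad}_{g^{-1}} \pi_U^*\mathcal{A} + \theta$, where $\theta$ denotes the Maurer--Cartan form of $G$ (understood through the fibre coordinate). For this to make sense I must check three things: that $\mathcal{A}_p(v)$ depends only on $v$ and not on the representing path $\gamma$; that it is linear in $v$; and that it is smooth. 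Smoothness is immediate from axiom (f), since for a smooth family $\phi(x,t)$ the assignment $x \mapsto \partial_t g_{\phi(x,\cdot)}(0)$ is smooth. Homogeneity, $\mathcal{A}_p(\lambda v) = \lambda\,\mathcal{A}_p(v)$, follows by applying the reparametrization $\gamma(t) \mapsto \gamma(\lambda t)$ and using that thin-homotopy invariance (axiom (e)) forces $g_{\gamma(\lambda\,\cdot)}(t) = g_\gamma(\lambda t)$, since a reparametrization is realized by a thin homotopy. Additivity then follows, once well-definedness is in hand, by representing $v + w$ through a suitable family of broken paths and invoking the concatenation axiom (d).

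Once $\mathcal{A}$ is established I would verify that $A$ is a genuine connection $1$-form: axiom (b) guarantees the lifts are nowhere vertical, so $H_x \vcentcolon= \{\dot{\widetilde\gamma}(0)\}$ is complementary to the vertical bundle and projects isomorphically onto $T_pM$; the Maurer--Cartan term gives the correct value of $A$ on fundamental vector fields; and $\mathrm{Ad}$-equivariance is built into the trivialized formula. Independence of the section, hence globality of $A$, follows from the transition law: if $s' = s\cdot\psi$ then comparing the two descriptions of the same lift forces $\mathcal{A}' = \mathrm{Ad}_{\psi^{-1}}\mathcal{A} + \psi^*\theta$, which is exactly the gauge-transformation rule making the local forms patch into a global connection. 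Finally, to prove that every $\widetilde\gamma \in \mathcal{P}_{tra}(P)$ is horizontal for $A$, I would evaluate $A(\dot{\widetilde\gamma}(t))$ at an arbitrary time $t$: by the concatenation axiom (d) the restriction of $\widetilde\gamma$ to $[t,1]$ is again a lift of the corresponding sub-path, so the computation of $\dot{\widetilde\gamma}(t)$ reduces to the initial-velocity case already encoded in the definition of $\mathcal{A}$. Equivalently, $g_\gamma$ satisfies the parallel-transport ODE $\dot g_\gamma = -\mathcal{A}(\dot\gamma)\,g_\gamma$, so $g_\gamma(t) = \mathcal{P}\exp\int_0^t \big(-\mathcal{A}(\dot\gamma)\big)$ and $A(\dot{\widetilde\gamma}) \equiv 0$.

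The main obstacle I anticipate is the well-definedness of $\mathcal{A}_p(v)$ — that the derivative $\dot g_\gamma(0)$ genuinely depends only on the $1$-jet of $\gamma$ at the origin and not on its higher-order behaviour. Thin-homotopy invariance (axiom (e)) is stated for paths with common endpoints, so it does not directly compare two paths sharing only an initial velocity; my plan is to bridge this gap by the rescaling argument above, writing $g_\gamma(\lambda) = e_G + \lambda\,\dot g_\gamma(0) + O(\lambda^2)$ and showing that two paths with the same $1$-jet produce endpoints $g_{\gamma_1}(\lambda)$ and $g_{\gamma_2}(\lambda)$ that differ by $o(\lambda)$, which forces $\dot g_{\gamma_1}(0) = \dot g_{\gamma_2}(0)$. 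This last estimate is where the smoothness axiom (f) must be used quantitatively, connecting the two paths through a smooth family whose endpoint discrepancy is of second order, and it is the step I expect to require the most care; it is precisely the detailed differentiation that the cited references \cite{1991IJTP...30.1171B}, \cite{2000math......7053M} and \cite{2007arXiv0705.0452S} carry out and that the present statement defers to.
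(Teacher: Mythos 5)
Your proposal is correct and its engine coincides with the paper's: both recover $A$ by differentiating at $t=0$ the $G$-valued discrepancy $g$ between a chosen lift and the canonical one, both rest on thin-homotopy invariance (Definition \ref{1-1}$(e)$) for well-definedness and defer that delicate differentiation step to \cite{1991IJTP...30.1171B}, \cite{2000math......7053M}, \cite{2007arXiv0705.0452S}, and both obtain horizontality because $g$ solves $-dR_g A(\dot\delta) = \dot g$. The route differs, though: you localize, defining a gauge field $\mathcal{A}_p(v) = -\dot g_\gamma(0)$ through a section and axiom $(f)$ and then patching via the transition rule $\mathcal{A}' = \mathrm{Ad}_{\psi^{-1}}\mathcal{A} + \psi^\star\theta$, whereas the paper defines $A(v)$ directly at an arbitrary $x_p \in P$ by choosing any curve $\delta$ in $P$ with $\dot\delta(0) = v$, comparing it with the lift of $\pi\circ\delta$ starting at $\delta(0)$ via $\tilde\gamma = \delta\cdot g$, and setting $A(v) = -\dot g(0)$, with no section at all. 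Your version buys the connection-form axioms for free (the glued formula $A = \mathrm{Ad}_{g^{-1}}\pi_U^\star\mathcal{A} + \theta$ is automatically $\mathrm{Ad}$-equivariant and reproduces fundamental vector fields) at the cost of the cocycle verification and section-independence; the paper's version is trivialization-free — a design principle it states explicitly and exploits in Section \ref{2} — and defines $A$ on all of $TP$ at once, extracting equivariance from axiom $(c)$ and nowhere-verticality from axiom $(b)$. Two refinements you should make: axiom $(d)$ asserts closure of $\mathcal{P}_{tra}(P)$ under composition, not restriction, so your claim that $\tilde\gamma\rvert_{[t,1]}$ is again a lift (used for horizontality at interior times, and implicitly in your rescaling step $g_{\gamma(\lambda\,\cdot)}(t) = g_\gamma(\lambda t)$) needs a short argument: lift the two sub-paths successively by $(a)$, compose by $(d)$, and identify the composite with $\tilde\gamma$ by the uniqueness contained in $(c)$. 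Also, additivity of $\mathcal{A}_p$ follows at once from smoothness (axiom $(f)$) together with degree-one homogeneity, by differentiating $\mathcal{A}_p(\lambda v) = \lambda\,\mathcal{A}_p(v)$ at $\lambda = 0^+$, so the broken-path construction can be dispensed with.
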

\begin{proof}
Let us first determine the value $A(v)$ at each point $x_p \in P$ for any tangent vector $v \in T_{x_p} P$: by choosing a smooth curve $\delta(t)$ in $P$ with $\delta(0) = x_p$ and $\Dot{\delta}(0) = v$, we can consider the lift $\Tilde{\gamma}$ of $\gamma = \pi \circ \delta$ in $\mathcal{P}_{tra}(P)$ with $\Tilde{\gamma}(0) = \delta(0)$. There is a unique smooth map $g: I \rightarrow G$ such that $\Tilde{\gamma}(t) = \delta(t) \cdot g(t)$ due to smooth $G$ action on bundle $P$. Then we set $A(v) = -\Dot{g}(0)$. To show that this definition is independent of the choice of $\delta$, thin-homotopy invariance of curves in $\mathcal{P}_{tra}(P)$ is necessary. The importance of this invariance was first noticed by Barrett in \cite{1991IJTP...30.1171B} and used explicitly in \cite{CP94}. We also refer the reader to \cite{2000math......7053M} and \cite{2007arXiv0705.0452S} for the detailed proof.

The process to demonstrate the smoothness and linearity of $A$ is straightforward. Finally, it can be seen, by the construction of $A$, that $g(t)$ is just the solution of $-dR_g A(\Dot{\delta}) = \Dot{g}$ with $g(0) = e_G$ and hence $\Tilde{\gamma} = \delta \cdot g$ is a horizontal lift induced from $A$.
\end{proof}

In the following computations involving differential forms, notation like $A(\partial_s \Tilde{\Sigma}) = \Tilde{\Sigma}^\star A(\partial_s)$ may be abbreviated to $A(\partial_s)$ or even $A_s$ if there is no ambiguity. Next proposition, adapted from \cite{2010RvMaP..22.1033C}, is significant for our discussion, so we quote the proof here further reference. 

\begin{proposition}\label{1-3}
Suppose $P$ is a principal bundle with a connection $A$ or a parallel transport bundle $\mathcal{P}_{tra}(P)$ which will induces a connection (Theorem \ref{1-2}). Let $\Tilde{\Sigma}: I \times I \rightarrow P$ be a smooth map such that $\Tilde{\Sigma}_0$ is a horizontal curve. Then the following statements are equivalent:
\begin{enumerate}
    \item[(a)] Let $A$ be the connection induced by $\mathcal{P}_{tra}(P)$, then $\partial_t A(\partial_s) = - F_A(\partial_s, \partial_t)$, for any $ s,t \in I$.
    \item[(b)] $\Tilde{\Sigma}_s(t) = \Tilde{\Sigma}(s,t)$ is a horizontal lift for any $ s \in I$.
\end{enumerate}
\end{proposition}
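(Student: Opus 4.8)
The plan is to reduce both implications to the Maurer--Cartan structure equation pulled back to the square. Writing $a \vcentcolon = \Tilde{\Sigma}^\star A$ for the $\mathfrak{g}$-valued one-form on $I \times I$ and abbreviating $a(\partial_s) = A(\partial_s)$, $a(\partial_t) = A(\partial_t)$ as in the paper's convention, I would first record the identity
\[ F_A(\partial_s, \partial_t) = \partial_s A(\partial_t) - \partial_t A(\partial_s) + [A(\partial_s), A(\partial_t)], \]
which follows from $\Tilde{\Sigma}^\star F_A = da + \tfrac{1}{2}[a \wedge a]$ together with $da(\partial_s,\partial_t) = \partial_s A(\partial_t) - \partial_t A(\partial_s)$, valid because the coordinate fields $\partial_s, \partial_t$ commute. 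Everything then becomes an exercise in manipulating this single relation, with no recourse to a local trivialization.

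For the direction $(b) \Rightarrow (a)$: if every $\Tilde{\Sigma}_s$ is horizontal then $A(\partial_t) \equiv 0$ on all of $I \times I$, so the first and third terms on the right-hand side vanish identically and the identity collapses to $F_A(\partial_s, \partial_t) = -\partial_t A(\partial_s)$, which is exactly $(a)$.

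The direction $(a) \Rightarrow (b)$ carries the real content. Substituting the hypothesis $\partial_t A(\partial_s) = -F_A(\partial_s, \partial_t)$ into the structure equation cancels the two curvature terms and leaves the first-order relation
\[ \partial_s\, A(\partial_t) = -[A(\partial_s), A(\partial_t)]. \]
Fixing $t$ and regarding $u(s) \vcentcolon = A(\partial_t)(s,t)$ as the unknown, this reads $\partial_s u = -\operatorname{ad}_{A(\partial_s)} u$, a linear homogeneous ODE in $s$ whose coefficient $-\operatorname{ad}_{A(\partial_s)}$ depends smoothly on $s$. The horizontality of $\Tilde{\Sigma}_0$ supplies the initial condition $u(0) = A(\partial_t)\rvert_{s=0} = 0$. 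Since $u \equiv 0$ is a solution and solutions of linear ODEs are unique, we conclude $A(\partial_t) \equiv 0$, i.e.\ every $\Tilde{\Sigma}_s$ is horizontal, which is $(b)$.

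The main obstacle is not any hard estimate but rather the bookkeeping: one must isolate the structure equation in its pulled-back form, stay consistent with the paper's sign convention for $F_A$ so that the curvature terms genuinely cancel under hypothesis $(a)$, and then recognize the surviving relation as a linear ODE in the single variable $s$. Once the ODE is identified, uniqueness together with the zero initial datum coming from $\Tilde{\Sigma}_0$ does all the work.
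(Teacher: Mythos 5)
Your proposal is correct and follows essentially the same route as the paper: both directions reduce to the pulled-back structure equation, and the implication $(a) \Rightarrow (b)$ comes down to the relation $\partial_s A(\partial_t) + [A(\partial_s), A(\partial_t)] = 0$ plus the zero initial condition from horizontality of $\Tilde{\Sigma}_0$. The only cosmetic difference is that where you invoke uniqueness for the linear ODE $\partial_s u = -\mathrm{ad}_{A(\partial_s)}\, u$ abstractly, the paper exhibits the integrating factor explicitly, showing $\mathrm{Ad}(g_t^{-1})\, A(\partial_t)$ is constant in $s$ for the parallel transport $g_t(s)$ along $\Tilde{\Sigma}(s,t)$.
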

\begin{proof}
One direction is immediate: if $(b)$ holds, the structure equation of curvature indicates $(a)$ because $A(\partial_t) = 0$ at any point. 

Conversely, when $(a)$ holds, we have $\partial_s A(\partial_t) + [A(\partial_s), A(\partial_t)] = 0$. For any fixed $t \in I$, let us consider the parallel transport $g_t(s)$ govern by $A$ with respect to the curve $\delta_t(s) = \Tilde{\Sigma}(s,t)$: that is, the solution of $-dR_{g_t} A(\Dot{\delta}_t) = \Dot{g}_t$ with $g_t(0) = e_G$. By taking the matrix form, it is easy to have: 
\begin{align}
\partial_s \big( g_t^{-1} \cdot  A (\partial_t) \cdot g_t \big) & = g_t^{-1} \cdot \big( A (\partial_s) \cdot A(\partial_t) + \partial_s A(\partial_t) - A(\partial_t) \cdot A(\partial_s)  \big) \cdot g_t   \notag \\
& = Ad(g_t^{-1}) \big( \partial_s A(\partial_t) + [A(\partial_s), A(\partial_t)] \big) = 0. 
\end{align}
This indicates that $Ad(g^{-1}) A_{s,t}(\partial_t) = A_{0,t}(\partial_t)$ for $g_t(0) = e_G$. But $A_{0,t}(\partial_t) = 0$ for $\Tilde{\Sigma}_0$ is horizontal. Thus we conclude that $A_{s,t}(\partial_t) = 0$ and $\Tilde{\Sigma}_s$ is a horizontal lift for all $s \in I$.
\end{proof}

The converse part of Theorem \ref{1-2} (i.e., constructing a parallel transport bundle from a given connection $A$), proceeds with taking the collection of all horizontal curves lifted from connection $A$. This structure has been used in \cite{1999CMaPh.204..493C} and \cite{2010RvMaP..22.1033C} to study connection over path space. Confirming the thin-homotopy invariance of these curves (Definition \ref{1-1}$(e)$) seems not so easy. Fortunately, next theorem can help to solve this problem. 

\begin{theorem}\label{1-4}
Suppose $\Sigma: \gamma_0 \Rightarrow \gamma_1$ is a square between paths from point $p$ to $q$. Let $\Tilde{\Sigma}$ be any horizontal lift of $\Sigma$ with $\Tilde{\Sigma}(s,0) = x_p$ for some point $x_p \in \pi^{-1}(p)$ and any $s \in I$ (cf. Definition \ref{2-4}). The difference between parallel transports of $\gamma_0$ and $\gamma_1$ at their terminal point $q$ is hence just difference between $\Tilde{\Sigma}_0(1)$ and $\Tilde{\Sigma}_1(1)$, which is revealed by the path-ordered exponential $g = \mathcal{P} \exp{\int_s \big(\int_t (\Tilde{\Sigma}^\star F_A)(\partial_s, \partial_t) \big) }$: that is, the identity $\Tilde{\Sigma}_0(1) = \Tilde{\Sigma}_1(1) \cdot g$ holds.
\end{theorem}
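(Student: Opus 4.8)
We have a square $\Sigma: \gamma_0 \Rightarrow \gamma_1$ (a smooth homotopy) between paths from $p$ to $q$. We take a horizontal lift $\tilde\Sigma$ with the initial condition $\tilde\Sigma(s,0) = x_p$ constant for all $s$ — meaning the whole bottom edge maps to a single point $x_p$ above $p$. Since each $\tilde\Sigma_s = \tilde\Sigma(s,\cdot)$ is a horizontal lift of $\gamma_s$, the point $\tilde\Sigma_s(1)$ is precisely the parallel transport of $x_p$ along $\gamma_s$. The claim is that $\tilde\Sigma_0(1) = \tilde\Sigma_1(1)\cdot g$ where $g = \mathcal{P}\exp\int_s\int_t (\tilde\Sigma^\star F_A)(\partial_s,\partial_t)$, so the "defect" between the two parallel transports is a surface-ordered integral of the curvature.

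**The curve to study.** The key object is the curve $s \mapsto \tilde\Sigma(s,1)$ traced out by the endpoints of the lifts, i.e. the top edge of the lifted square. I want to understand how this endpoint moves as $s$ varies. Each $\tilde\Sigma_s$ is horizontal, so $A(\partial_t \tilde\Sigma) = 0$ everywhere on the square. I'd define $g(s) \in G$ by the relation comparing the endpoint curve to a reference: write $\tilde\Sigma(s,1) = \tilde\Sigma(0,1)\cdot g(s)^{-1}$ or some such bookkeeping — the precise placement of inverses must be chosen so the final identity comes out as stated. Then the whole problem reduces to computing the ODE satisfied by $g(s)$, i.e. computing $A(\partial_s)$ along the top edge $t=1$.

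Let me mirror the argument of Proposition~\ref{1-3}. Consider $f(s,t) = A(\partial_s\tilde\Sigma)(s,t)$, a $\mathfrak{g}$-valued function on the square. Along the bottom edge $t=0$ the lift is constant in $s$, so $\partial_s\tilde\Sigma(s,0)=0$ and hence $f(s,0)=0$. I want $f(s,1)$. Differentiating in $t$ and using the structure equation $F_A = dA + \tfrac12[A,A]$ together with $A(\partial_t)=0$ (horizontality), I get the fundamental relation $\partial_t A(\partial_s) = -F_A(\partial_s,\partial_t) = \tilde\Sigma^\star F_A(\partial_s,\partial_t)$ — this is exactly Proposition~\ref{1-3}(a) read in reverse. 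Integrating in $t$ from $0$ to $1$ yields
\[
A(\partial_s\tilde\Sigma)(s,1) = \int_0^1 \tilde\Sigma^\star F_A(\partial_s,\partial_t)\,dt,
\]
since the $t=0$ boundary term vanishes. This is the crucial step: the $s$-component of the connection along the top edge is the fiber integral of the pulled-back curvature.

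**Assembling the holonomy.** Once I have $A(\partial_s)$ along $t=1$ expressed as $\int_t \tilde\Sigma^\star F_A(\partial_s,\partial_t)$, the endpoint curve $s\mapsto\tilde\Sigma(s,1)$ satisfies a parallel-transport-type ODE whose driving one-form is precisely this fiber integral. By the definition of the path-ordered exponential (the ODE $dR_{g}f = \dot g$ reviewed in the Basic Notation), the group element relating $\tilde\Sigma_0(1)$ and $\tilde\Sigma_1(1)$ is $\mathcal{P}\exp\int_0^1\bigl(\int_t\tilde\Sigma^\star F_A(\partial_s,\partial_t)\bigr)ds = \mathcal{P}\exp\int_s\int_t\tilde\Sigma^\star F_A(\partial_s,\partial_t)$, giving the claimed $g$. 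I'd track signs and inverse placement carefully so that the identity reads $\tilde\Sigma_0(1) = \tilde\Sigma_1(1)\cdot g$ rather than its inverse; this is where the $-\dot g(0)$ sign convention from Theorem~\ref{1-2} and the orientation of integration in $s$ must be reconciled.

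**Main obstacle.** The genuinely delicate point is justifying $A(\partial_t\tilde\Sigma)=0$ at \emph{every} point of the square rather than just knowing each $\tilde\Sigma_s$ is a lift, and then interchanging the order of differentiation/integration to pass from the pointwise structure equation to the fiber-integrated statement. Horizontality of each $\tilde\Sigma_s$ gives $A(\partial_t)=0$ identically on $I\times I$, so $\partial_s\bigl(A(\partial_t)\bigr)=0$ as well, and the structure equation converts the mixed term into curvature cleanly — so the interchange is legitimate by smoothness of $\tilde\Sigma$. The remaining bookkeeping (sign and inverse conventions) is routine but must be done consistently with the conventions fixed earlier in the paper.
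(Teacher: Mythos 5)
Your proposal is correct and takes essentially the same route as the paper: both study the top-edge curve $\delta(s)=\Tilde{\Sigma}(s,1)$, use the constancy of the bottom edge to get $A(\partial_s)=0$ at $t=0$, integrate the relation $\partial_t A(\partial_s)=-F_A(\partial_s,\partial_t)$ from Proposition \ref{1-3} up to $t=1$, and read off $g$ as the path-ordered exponential solving the transport ODE $-dR_g A(\dot{\delta})=\dot{g}$ along the top edge. The one slip you flagged yourself is real but harmless: your displayed identity should read $A(\partial_s\Tilde{\Sigma})(s,1)=-\int_0^1\Tilde{\Sigma}^\star F_A(\partial_s,\partial_t)\,dt$, and this minus sign cancels against the minus in the transport ODE, yielding exactly the stated $g=\mathcal{P}\exp\int_s\int_t\Tilde{\Sigma}^\star F_A(\partial_s,\partial_t)$ as in the paper.
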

\begin{proof}
The proof is based on the following simple observation: we consider the curve $\delta(s) = \Tilde{\Sigma}(s,1)$, which is a lift for the constant path $c_q$. The parallel transport $g(s): I \rightarrow G$ of $c_q$, when acting on the right of $\delta(s)$, should yield a constant horizontal lift: that is, $\delta(s) \cdot g(s) = \Tilde{\Sigma}_0(1)$, for all $ s \in I$. Therefore, the value $g(1)$ is the required element. 

Since $g(s)$ is the solution of $- dR_g A(\Dot{\delta}) = \Dot{g}$ with $g(0) = e_G$, its value at $s=1$ can be written as the path-ordered exponential $\mathcal{P} \exp{\int_s -A(\partial_s)}$, where $A(\partial_s)$ is the abbreviation of $A(\Dot{\delta}) = (\Tilde{\Sigma}^\star A)_{s,1}(\partial_s)$. By using the assumption that $\Tilde{\Sigma}$ is a horizontal lift and applying Proposition \ref{1-3}, we obtain that $\mathcal{P} \exp{\int_s -A(\partial_s)} = \mathcal{P} \exp{\int_s \big(\int_t F_A(\partial_s, \partial_t) \big) }$ and this finishes the proof. 

If two paths $\gamma_0$ and $\gamma_1$ happen to be thin-homotopic, then at any point in $\Tilde{\Sigma}$ either $\partial_t$ vanishes or $\partial_s$ is a vertical vector for $\pi:P \rightarrow M$ is a horizontal projection. Therefore, the integral $\int_t \Tilde{\Sigma}^\star F_A(\partial_s,\partial_t) = 0$ and two parallel transports equal to each other. 
\end{proof}

The formula: $\mathcal{P} \exp{\int_s -A(\partial_s)} = \mathcal{P} \exp{\int_s \big(\int_t F_A(\partial_s, \partial_t) \big) }$ is called non-abelian Stokes' theorem in two dimension. There are also other kinds of proof in higher gauge theory (see for instance \cite{2007arXiv0705.0452S} and \cite{2015JGP....95...28S}). 

\begin{theorem}\label{1-5}
Let $P$ be a principal bundle. Any connection $A$ on $P$ induces a parallel transport bundle $\mathcal{P}_{tra}(P)$. Combining with Theorem \ref{1-2}, we obtain a canonical one-to-one correspondence between connections and parallel transport bundles for any fixed bundle $P$.
\end{theorem}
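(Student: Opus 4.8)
The plan is to define $\mathcal{P}_{tra}(P)$ to be the collection of all smooth curves in $P$ that are horizontal with respect to $A$, with the natural projection $\tilde\pi$ sending a horizontal curve to its image path in $M$, and then to verify the six axioms (a)--(f) of Definition \ref{1-1} one at a time. Most of these reduce to the classical existence, uniqueness, and equivariance theory of horizontal lifts. Given a path $\gamma$ and a starting point $x_p \in \pi^{-1}(\gamma(0))$, the horizontal lift is the unique solution of the parallel transport ODE $-dR_{g} A(\dot\delta) = \dot g$ along any auxiliary lift $\delta$ of $\gamma$, which yields axiom (a); the fact that the horizontal distribution $\ker A$ is complementary to the vertical subspace yields (b); the right-invariance $R_{g\star}(\ker A) = \ker A$ of a principal connection, together with ODE uniqueness, yields the equivariance in (c); and concatenating horizontal lifts that agree at the matching endpoint produces a horizontal lift of the composite path, giving (d), the composite being smooth by hypothesis.

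Axiom (e) is precisely the content already extracted in Theorem \ref{1-4}. If $\gamma_0$ and $\gamma_1$ are thin-homotopic with common endpoints, I would take a thin square $\Sigma$ between them, lift it to a horizontal $\tilde\Sigma$ with constant bottom edge $\tilde\Sigma(s,0)=x_p$, and read off the relation $\tilde\Sigma_0(1) = \tilde\Sigma_1(1)\cdot g$ with $g = \mathcal{P}\exp\int_s\int_t F_A(\partial_s,\partial_t)$; the final paragraph of the proof of Theorem \ref{1-4} shows $g = e_G$, because thinness of $\Sigma$ forces $\int_t F_A(\partial_s,\partial_t) = 0$, so the two lifts terminate at the same point. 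Thus (e) needs no new argument.

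The step I expect to require the most care is the smoothness axiom (f). Here I would invoke smooth dependence of solutions of ODEs on initial conditions and on parameters: for a smooth family $\phi: I^n\times I \to M$, the lift $s_\star(\phi(x))(t)$ is obtained by solving the parallel transport equation along $\phi(x)$ with initial value $s(\phi(x,0))$, whose coefficients and initial data are jointly smooth in $(x,t)$, and the standard parametrized ODE theorem then gives joint smoothness of the solution. The subtlety is to set this up in a trivialization so the equation genuinely has the parametrized form the theorem requires, and to check that the resulting collection is independent of such choices.

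Finally, for the one-to-one correspondence I would verify that the two constructions are mutually inverse. Starting from a connection $A$, the collection of $A$-horizontal curves feeds into Theorem \ref{1-2} and, by the last line of that proof, recovers a connection whose horizontal curves are exactly these; since these curves realize every horizontal direction at every point, the recovered horizontal distribution contains $\ker A$, and a dimension count forces equality, so a principal connection being determined by its horizontal distribution gives back $A$. Conversely, starting from an abstract $\mathcal{P}_{tra}(P)$, Theorem \ref{1-2} produces $A$ with every element of $\mathcal{P}_{tra}(P)$ being $A$-horizontal, which is one inclusion; the reverse inclusion holds because any $A$-horizontal curve and the element of $\mathcal{P}_{tra}(P)$ with the same starting point furnished by axiom (a) are both horizontal lifts of the same path, hence coincide. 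The main obstacle is thus not any single identity but ensuring that uniqueness of horizontal lifts is legitimately available on both sides --- on the connection side it is classical ODE uniqueness, while on the parallel-transport-bundle side it is exactly axiom (c) --- so that both round trips collapse to the identity.
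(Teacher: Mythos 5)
Your proposal is correct and follows essentially the same route as the paper: form $\mathcal{P}_{tra}(P)$ from all $A$-horizontal curves, note that axioms (a)--(d) are classical ODE facts, derive the thin-homotopy axiom (e) from the two-dimensional non-abelian Stokes' theorem (Theorem \ref{1-4}), and obtain the smoothness axiom (f) from horizontal lifts of smooth families. The paper states all of this in three sentences and leaves the round-trip verification of the bijection implicit, so your explicit check that the two constructions are mutually inverse (via uniqueness of horizontal lifts on one side and axioms (a), (c) on the other) is a welcome elaboration rather than a deviation.
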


\begin{proof}
Suppose $A$ is a connection on the fixed bundle $P$, let us take the entire collection of horizontal curves to form $\mathcal{P}_{tra}(P)$. The first four conditions in Definition \ref{1-1} are satisfied immediately. $2$-dimensional non-abelian Stokes' theorem ensures the thin-homotopy invariance and the last condition on local sections is guaranteed by the existence of horizontal lifts of squares.
\end{proof}

\medskip

We end this section by reviewing some basic aspects of categorified gauge theory. Any principal bundle $P$ concerned here is assumed to be equipped with a collection of local trivializations $(U_\alpha,\Phi_\alpha)$. In \cite{2007arXiv0705.0452S}, Schreiber and Waldorf have succeeded in categorifying parallel transport along paths as a functor between certain categories. It has also been proved that this functor is equivalent to its descent data which can be thought as a collection of functors $F_\alpha$ defined between thin-path groupoid $\mathcal{P}_1(U_\alpha)$ and structure group $G$ (viewed as a groupoid with only one object). We try to construct $F_\alpha$ from both connection and parallel transport bundle in the next paragraph. This discussion is useful in Theorem \ref{2-9}. 

Let us first recall the standard process for computing parallel transports locally by a given connection $A$: suppose $\gamma$ is a path in $U_\alpha$ with $s_\alpha \circ \gamma$ as a lift, where $s_\alpha$ is the local section associated with $(U_\alpha,\Phi_\alpha)$. Let $A_\alpha$ be the pullback one form of $A$ by $\pi \circ \Phi_\alpha$, and let $g$ be the solution of $-dR_g A_\alpha(\Dot{\gamma}) = \Dot{g}$ with $g(0) = e_G$. When acting on the right of $s_\alpha \circ \gamma$, the new curve $(s_\alpha \circ \gamma) \cdot g$ is a horizontal lift of $\gamma$ (see Chapter 5 of \cite{2017MATHGAUGE}). Since any path, thin-homotopic with $\gamma$, possesses the same parallel transport of $\gamma$ at its terminal point, the functor $F_\alpha$ can be defined by assigning the thin-homotopic class of $\gamma$ to $g(1)$. If we begin with a parallel transport bundle $\mathcal{P}_{tra}(P)$, then the image $F_\alpha(\gamma)$ should be changed to the element $g \in G$ which transits $s_\alpha(\gamma(1))$ to $s_{\alpha \star}(\gamma(1))$ (see Definition \ref{1-1}$(e)$). It is due to Theorem \ref{1-5} that these two ways to construct $F_\alpha$ have the same effect.


\section{Parallel Transport of Strings}\label{2}

In this section, we first summarize the standard definitions of crossed module and fake curvature (see \cite{2004JMP....45.3949G}, \cite{2004hep.th...12325B} and \cite{FP2011} for more details). Then we define the concept of standard lift which is the foundation of our reformulation on surface holonomy. After discussing some basic properties of surface holonomy, we prove the generalized non-abelian Stokes' theorem in three dimension. Then we show how to define parallel transport of strings geometrically by parallel transport bundle. At the end of this section, we try to discuss the correlation between our approach and the established higher gauge theory via computing surface holonomy explicitly through local trivializations.

\bigskip

\begin{definition}\label{2-1}
A \textbf{crossed module} $\{G,H,\alpha,\tau)$ consists of two Lie groups $G$ and $H$ with a Lie group morphism $\tau: H \rightarrow G$ and a Lie group action $\alpha: G \times H \rightarrow H$, such that the following properties hold for any $g \in G$ and $h,h' \in H$:
\begin{enumerate}
    \item[(a)] $\tau(\alpha_g (h)) = Ad_g \tau(h)$.
    \item[(b)] $\alpha_{\tau(h)} h'= Ad_h h'$.
\end{enumerate} 
\end{definition}

Suppose $(G,H,\alpha,\tau)$ is a crossed module. By taking their Lie algebras $\mathfrak{g}, \mathfrak{h}$ and differentials $\tau: \mathfrak{h} \rightarrow \mathfrak{g}$ and $\alpha: \mathfrak{g} \times \mathfrak{h} \rightarrow \mathfrak{h}$ (we use the same notation on the differentials of $\alpha$ and $\tau$ for simplicity), we obtain a \textbf{differential crossed module} with following properties:
\begin{enumerate}
    \item[(a)] For any $X \in \mathfrak{g}$ and $Y \in \mathfrak{h}$, $\tau(\alpha(X, Y)) = [X, \tau(Y)]$.
    \item[(b)] For any $Y,Y' \in \mathfrak{h}$, $\alpha(\tau(Y), Y') = [Y,Y']$.
\end{enumerate}

\begin{definition}\label{2-2}
Suppose $P$ is a principal $G$ bundle with a connection $A$ and a crossed module $(G,H,\alpha,\tau)$, a \textbf{fake curvature} $B$ related to these structure is a two form on $P$ taking value in $\mathfrak{h}$ such that the following conditions hold:
\begin{enumerate}
    \item[(a)] $B$ is horizontal and $\alpha$-equivariant.
    \item[(b)] $\tau(B) = F_A$.
\end{enumerate}
The three form $F_B \vcentcolon = dB + A \wedge_\alpha B$ is called \textbf{2-curvature} of $B$. It has been proved in \cite{2010arXiv0710.4310F} that any 2-curvature is also horizontal and $\alpha$-equivariant.
\end{definition}

It should be noted that all the squares concerned here, without further qualification, are just smooth maps from $I \times I$ to its target manifold. This is not the same with convention in higher gauge theory where squares are pinned at endpoints to fit in the 2-category structure. Squares considered there are called bigons (see \cite{2008arXiv0802.0663S}, \cite{2014arXiv1410.6938P} and \cite{2017arXiv170408542W} for more details). 

\begin{definition}[Cube]\label{2-3}
A \textbf{cube} $\Theta: \Sigma_0 \Rightarrow \Sigma_1$ is a smooth homotopy between squares satisfying the following conditions:
\begin{enumerate}
    \item[(a)]\label{(a)} $\Theta(r,s,i): \Sigma_0(s,i) \Rightarrow \Sigma_1(s,i)$ is a thin homotopy between paths when $i = 0$ or $1$.
    \item[(b)]\label{(b)} $\Theta(r,i,t): \Sigma_0(i,t) \Rightarrow \Sigma_1(i,t)$ is a thin homotopy between paths when $i = 0$ or $1$.
\end{enumerate}
\end{definition}

\begin{remark}
In the context of gauge theory, parallel transport on paths is used to describe interactions between gauge boson fields and particles moving along paths. Similarly, a square $\Sigma: \gamma_0 \Rightarrow \gamma_1$ can be thought as the worldsheet swept out by a string, so any cube $\Theta$ of squares is just a smooth family of worldsheets. Since $\Theta(r,i,t)$ and $\Theta(i,s,t)$ are thin-homotopies, these worldsheets are said to have the same boundary with little perturbations. Based on all these considerations, higher parallel transport on squares with free edges is called parallel transport of strings in this paper.
\end{remark}

\begin{definition}[Standard Lift for squares]\label{2-4}
Let $\Sigma$ be any square, a \textbf{standard lift} of $\Sigma$ is defined as any horizontal lift $\Tilde{\Sigma}_{sd}$ such that  $\Tilde{\Sigma}_{sd}(s,0)$ is a horizontal lift of the path $\Sigma(s,0)$.
\end{definition}

\begin{remark}
It is obvious to see that standard lifts exist: suppose $\Tilde{\Sigma}$ is a horizontal lift of $\Sigma$, we just need to take the parallel transport $g(s)$ of $\Tilde{\Sigma}(s,0)$ and act it on the right of $\Tilde{\Sigma}$. Additionally, it is important to notice that, forced by this definition, two standard lifts $\Tilde{\Sigma}_1$ and $\Tilde{\Sigma}_2$ of the same square $\Sigma$ can differ by only one element $g \in G$ with $\Tilde{\Sigma}_1(s,0) = \Tilde{\Sigma}_2(s,0) \cdot g$. Thus any standard lift is totally determined by its origin point $\Tilde{\Sigma}(0,0)$.
\end{remark}

\begin{definition}[Standard Lift for Cubes]\label{2-5}
The concept of standard lift can be extended to three dimension. We say that $\Tilde{\Theta}$ is a standard lift of a smooth cube $\Theta: \Sigma_0 \Rightarrow \Sigma_1$ if the following conditions hold:
\begin{enumerate}
    \item[(a)] For any fixed $r,s \in I$, $\Tilde{\Theta}_{r,s}(t)$ is a horizontal lift in $t$ direction.
    \item[(b)] For any fixed $r \in I$, $\Tilde{\Theta}_r(s,0)$ is a horizontal lift in $s$ direction.
    \item[(b)] The curve $\Tilde{\Theta}(r,0,0)$ is a horizontal lift in $r$ direction. 
\end{enumerate}
\end{definition}

\begin{remark}
The first two conditions together imply that for each fixed $r \in I$, $\Tilde{\Theta}_r(s,t)$ is a standard lift of the underlying square. Standard lift in high dimension (see Definition \ref{3-4}) is also built step by step. We first deal with the case in $t$ direction as usual. Then we turn to $s$ direction at $t = 0$ and so on. While horizontal curves determined in the preceding steps may be influenced by group action from the following steps. they will remain horizontal for each of them is acted by the group as a whole but not pointise. Similarly to the previous case, standard lifts for cubes are also determined by their origins at $r=s=t=0$. The concept of standard lift is indispensable and essential to our global approach of higher parallel transport and non-abelian Stokes' theorem. 
\end{remark}

Inspired by the work of Schreiber and Waldorf in \cite{2008arXiv0802.0663S}, we reformulate the notion of surface holonomy on common principal bundle. We do not assume any choice of local trivializations and squares are allowed to have free edges. As far as we are aware, it is also the first time to introduce standard lifts in the definition. 

\begin{definition}\label{2-6}
Suppose $P$ is a principal bundle with a connection $A$ and a fake curvature $B$. Let $\Sigma$ be any square in $M$. With respect to a given standard lift $\Tilde{\Sigma}$, the \textbf{surface holonomy} $tra(\Sigma)$ of $\Sigma$ is defined as the path-ordered exponential $\mathcal{P} \exp{\int_s \big(\int_t \Tilde{\Sigma}^\star B (\partial_s, \partial_t) \big) }$.
\end{definition}

Let $\Sigma$ be any square, since there are various ways to lift the point $\Sigma(0,0)$ into its fiber, its surface holonomy may change accordingly. However, it is to be seen in Proposition \ref{2-8}$(a)$ that these differences are totally under control by the group action $\alpha$. Let us introduce the following useful lemma related to the crossed module $(G,H,\alpha,\tau)$ before Proposition \ref{2-8}. 

\begin{lemma}\label{2-7}
Let $h(s): \mathbb{R} \rightarrow H$ be any solution of $\Dot{h} = dR_h f$, where $f(s): \mathbb{R} \rightarrow \mathfrak{h}$ is a smooth map. Then the following statements hold:
\begin{enumerate}
    \item[(a)] The function $\tau(h)$ is a solution of $\Dot{g} = dR_g \tau (f)$.
    \item[(b)] For any $g \in G$, $\alpha_g h$ is a solution of the original equation by substituting $f$ with $\alpha_g f$.
\end{enumerate}
\end{lemma}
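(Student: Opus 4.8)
The plan is to treat each statement as a direct verification on ordinary differential equations, applying the morphism property of $\tau$ and the action property of $\alpha$ to the given equation $\Dot{h} = dR_h f$. Both parts have the same flavor: we start with a solution curve in $H$, push it forward through a Lie-group-level map ($\tau$ in part (a), $\alpha_g$ in part (b)), and check that the resulting curve in the target group satisfies the claimed equation. The key technical fact I would use is the compatibility of a Lie group morphism (or a fixed action diffeomorphism) with right translations, namely that differentiating $\tau(h(s))$ or $\alpha_g(h(s))$ produces exactly the right-translated image of $\Dot{h}(s)$ under the differential of the map.

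For part (a), I would compute $\frac{d}{ds}\tau(h(s))$ via the chain rule as $d\tau_{h(s)}\big(\Dot{h}(s)\big)$. Substituting $\Dot{h} = dR_h f$ and using that $\tau: H \rightarrow G$ is a Lie group homomorphism, the crucial identity is $\tau \circ R_h = R_{\tau(h)} \circ \tau$, whose differential gives $d\tau \circ dR_h = dR_{\tau(h)} \circ d\tau$. Applying this to $f(s)$ yields $\frac{d}{ds}\tau(h) = dR_{\tau(h)}\big(\tau(f)\big)$, which is precisely the equation $\Dot{g} = dR_g \tau(f)$ with $g = \tau(h)$. For part (b), the structure is identical: for fixed $g \in G$ the map $\alpha_g: H \rightarrow H$ is a Lie group automorphism, so it intertwines right translation, $\alpha_g \circ R_h = R_{\alpha_g(h)} \circ \alpha_g$, and differentiating gives $d\alpha_g \circ dR_h = dR_{\alpha_g(h)} \circ d\alpha_g$. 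Hence $\frac{d}{ds}\alpha_g(h) = dR_{\alpha_g(h)}\big(\alpha_g(f)\big)$, confirming that $\alpha_g h$ solves the equation with $f$ replaced by $\alpha_g f$.

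I do not anticipate a genuine obstacle here, since the statement is essentially the functoriality of the path-ordered exponential under group homomorphisms. The only point requiring mild care is the bookkeeping of which differentials act at which points: one must be careful that the differential of $\tau$ is taken at $h(s)$ and that the right-translation identity $\tau \circ R_h = R_{\tau(h)} \circ \tau$ is differentiated at the identity in the appropriate slot so that $f(s) \in \mathfrak{h}$ maps to $\tau(f(s)) \in \mathfrak{g}$ correctly. I would verify this intertwining relation cleanly at the group level first and only then differentiate, rather than manipulating matrix expressions, so that the argument does not secretly assume a matrix representation. Once the intertwining identities are established, both conclusions follow immediately by uniqueness of solutions to the respective ODEs, and the initial-condition matching (the image of $h(0)$ under $\tau$ or $\alpha_g$) is automatic.
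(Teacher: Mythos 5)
Your proposal is correct and matches the paper's own proof in substance: the paper verifies exactly your intertwining identities $d\tau \circ dR_h = dR_{\tau(h)} \circ d\tau$ and $d\alpha_g \circ dR_h = dR_{\alpha_g(h)} \circ d\alpha_g$ by representing $f(s)$ as $\Dot{\gamma}(0)$ for a curve $\gamma$ in $H$ and differentiating $\tau(\gamma \cdot h) = \tau(\gamma)\cdot\tau(h)$ and $\alpha_g(\gamma \cdot h) = \alpha_g(\gamma)\cdot\alpha_g(h)$ at $t=0$. Your only deviation is the (harmless, and unnecessary for the statement as posed) appeal to uniqueness of ODE solutions at the end; the direct verification already suffices.
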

\begin{proof}
For any fixed $s \in \mathbb{R}$, suppose $\gamma(t)$ is a smooth curve in $H$ with $\Dot{\gamma}(0) = f(s)$. Then we have the following:
\begin{align}
     \tau(\Dot{h}) & = \tau \big( dR_h f \big) = \frac{d}{dt}\Big\rvert_0 \tau(\gamma \cdot h) = \frac{d}{dt}\Big\rvert_0 \big(\tau(\gamma) \cdot \tau(h) \big) = dR_{\tau(h)} \tau(f), \\ 
    \frac{d}{ds} (\alpha_g h) & = \alpha_g \Dot{h}  = \alpha_g (dR_h f) = \frac{d}{dt}\Big\rvert_0 \alpha_g(\gamma \cdot h) = \frac{d}{dt}\Big\rvert_0 \big( \alpha_g(\gamma) \alpha_g (h) \big) = dR_{\alpha_g(h)} \alpha_g(f). 
\end{align}
These two identities demonstrate the statements.
\end{proof}

\begin{proposition}\label{2-8}
The following properties hold for surface holonomy: (the notation $\bullet$ and $\circ$ are used to denote vertical and horizontal compositions of squares, see \cite{2004hep.th...12325B} for more details.)
\begin{enumerate}
    \item[(a)] For any square $\Sigma$, the surface holonomy $tra(\Sigma)$ is well-defined up to action $\alpha$: that is, if $\Tilde{\Sigma}_1 = \Tilde{\Sigma}_2 \cdot g$ are two standard lifts of $\Sigma$, then $tra_1(\Sigma) = \alpha_{g^{-1}} tra_2(\Sigma)$.
    
    \item[(b)] Suppose two squares $\Sigma_1$ and $\Sigma_2$ are smoothly compatible in the vertical direction. Let $\Tilde{\Sigma}_1$ and $\Tilde{\Sigma}_2$ be two standard lifts such that $\Tilde{\Sigma}_1(1,0) = \Tilde{\Sigma}_2(0,0)$. Then, obviously, $\Tilde{\Sigma}_2 \bullet \Tilde{\Sigma}_1$ is a standard lift of $\Sigma_2 \bullet \Sigma_1$ and $tra(\Sigma_2 \bullet \Sigma_1) = tra(\Sigma_2) \cdot tra(\Sigma_1)$ with respect to these lifts.
    
    \item[(c)] Suppose two squares $\Sigma_1$ and $\Sigma_2$ are smoothly compatible in the horizontal direction. Let $\widetilde{\Sigma_2 \circ \Sigma_1}$ be any standard lift of $\Sigma_2 \circ \Sigma_1$. If we take standard lifts $\Tilde{\Sigma}_1$ and $\Tilde{\Sigma}_2$ of these two squares with $\Tilde{\Sigma}_1(0,0) = \widetilde{\Sigma_2 \circ \Sigma_1}(0,0)$ and $\Tilde{\Sigma}_2(0,0) = \widetilde{\Sigma_2 \circ \Sigma_1}(0,1/2)$ (remember standard lifts are uniquely determined by their origin points), then $tra(\Sigma_2 \circ \Sigma_1) = tra(\Sigma_1) \cdot tra(\Sigma_2)$ with respect to these lifts.
\end{enumerate}
\end{proposition}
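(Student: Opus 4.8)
The plan is to dispatch the three parts in increasing order of difficulty, since (a) and (b) are formal while (c) carries the real content. For (a), starting from $\Tilde{\Sigma}_1 = \Tilde{\Sigma}_2 \cdot g$ with $g \in G$ constant, the $\alpha$-equivariance of $B$ (Definition \ref{2-2}(a)) gives $\Tilde{\Sigma}_1^\star B = \alpha_{g^{-1}} \Tilde{\Sigma}_2^\star B$, so the inner integrand satisfies $\int_t \Tilde{\Sigma}_1^\star B(\partial_s,\partial_t) = \alpha_{g^{-1}} \int_t \Tilde{\Sigma}_2^\star B(\partial_s,\partial_t)$, and Lemma \ref{2-7}(b) applied to the defining ODE of the path-ordered exponential yields $tra_1(\Sigma) = \alpha_{g^{-1}} tra_2(\Sigma)$. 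For (b), the concatenation $\Tilde{\Sigma}_2 \bullet \Tilde{\Sigma}_1$ is again standard: each $t$-slice is horizontal, and the bottom $s$-curve is the concatenation of two $A$-horizontal $s$-curves matched at $\Tilde{\Sigma}_1(1,0)=\Tilde{\Sigma}_2(0,0)$. Since the inner $t$-integral is an ordinary linear integral while the outer $s$-exponential is path-ordered, the outer integral splits over $[0,\tfrac12]$ and $[\tfrac12,1]$, and the concatenation rule for $\mathcal{P}\exp$ (later interval on the left) gives $tra(\Sigma_2 \bullet \Sigma_1) = tra(\Sigma_2)\cdot tra(\Sigma_1)$.

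For (c), write $W := \widetilde{\Sigma_2 \circ \Sigma_1}$ and split the $t$-domain at $t=\tfrac12$. The restriction $W|_{[0,1/2]}$ is a horizontal lift of $\Sigma_1$ whose bottom $s$-curve is horizontal, hence a standard lift; by uniqueness and the origin condition it equals $\Tilde{\Sigma}_1$, so its inner contribution is exactly $f_1(s) := \int_t \Tilde{\Sigma}_1^\star B(\partial_s,\partial_t)$. The subtlety is that $W|_{[1/2,1]}$, though a horizontal lift of $\Sigma_2$, is in general \emph{not} standard, since its seam curve $s \mapsto W(s,\tfrac12)$ need not be $A$-horizontal. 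I would therefore compare it with the genuine standard lift $\Tilde{\Sigma}_2$, which shares the origin $W(0,\tfrac12)=\Tilde{\Sigma}_2(0,0)$, by writing $W(s,t) = \Tilde{\Sigma}_2(s,t)\cdot k(s)$ for a smooth $k\colon I \to G$ with $k(0)=e$.

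Two computations then finish the proof. First, since $k$ depends only on $s$ and $B$ is horizontal, the vertical term produced by $\dot k$ is annihilated by $B$, and $\alpha$-equivariance gives the seam contribution $\int_t W^\star B(\partial_s,\partial_t) = \alpha_{k(s)^{-1}} f_2(s)$ with $f_2(s) := \int_t \Tilde{\Sigma}_2^\star B(\partial_s,\partial_t)$. Second, and this is the step I expect to be the main obstacle, I must identify $k(s)$. Applying Proposition \ref{1-3}(a) to the horizontal lift $\Tilde{\Sigma}_1$ and integrating $\partial_t A(\partial_s) = -F_A(\partial_s,\partial_t)$ in $t$ from $0$ to $1$, together with the standard-lift condition $A(\partial_s)=0$ along the bottom, shows that $A(\partial_s)$ along the seam equals $-\int_t F_A(\partial_s,\partial_t)$. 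The fake-curvature identity $\tau(B)=F_A$ rewrites this as $-\tau(f_1(s))$, and comparing with the gauge-transformation formula $A(\partial_s)=k^{-1}\dot k$ along $\Tilde{\Sigma}_2(s,0)\cdot k(s)$ gives $k^{-1}\dot k = -\tau(f_1)$; Lemma \ref{2-7}(a) then identifies $k(s)^{-1} = \tau(h_1(s))$, where $h_1(s):=\mathcal{P}\exp\int_0^s f_1$ is the partial holonomy of $\Sigma_1$.

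With $k(s)^{-1}=\tau(h_1(s))$ established, the crossed-module relation $\alpha_{\tau(h)} = Ad_h$ turns the full inner integrand into $f_1(s) + Ad_{h_1(s)} f_2(s)$. I would then verify that $H(s) := h_1(s) h_2(s)$, with $h_2(s):=\mathcal{P}\exp\int_0^s f_2$, solves the ODE defining $tra(\Sigma_2\circ\Sigma_1)$: a direct differentiation gives $\dot H H^{-1} = f_1 + Ad_{h_1} f_2$ with $H(0)=e$, so by uniqueness $tra(\Sigma_2\circ\Sigma_1)=H(1)=tra(\Sigma_1)\cdot tra(\Sigma_2)$. The conceptual heart, where the care is needed, is the identification of the seam defect $k(s)$ via the fake-curvature condition and the two competing logarithmic-derivative conventions; the remainder is routine bookkeeping.
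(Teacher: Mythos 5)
Your proposal is correct and follows essentially the same route as the paper's proof: parts (a) and (b) via $\alpha$-equivariance with Lemma \ref{2-7}(b) and the cocycle property of the path-ordered exponential, and part (c) by splitting the lift at $t=\tfrac12$, identifying the seam defect through Proposition \ref{1-3}, $\tau(B)=F_A$ and Lemma \ref{2-7}(a) (your $k(s)$ is the inverse of the paper's parallel transport $g(s)$, with $k^{-1}=\tau(h_1)=g$), using $\alpha_{\tau(h_1)}=Ad_{h_1}$, and checking that $h_1h_2$ solves the defining ODE. The only differences are notational conventions, not substance.
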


\begin{remark}
When written out by local trivializations and restricted to the case of bigons, the last two properties provide the motivation to define parallel transport on bigons as a 2-functor between certain 2-groupoids (see \cite{2008arXiv0802.0663S} and \cite{2014arXiv1410.6938P} for more details). However, it should be careful to notice that our horizontal composition formula is a little bit different from the original one (see for instance \cite{2015JGP....95...28S} and \cite{2018arXiv181110060V}). The dissimilarity is caused by a choice related to local sections, which will be explained in the end of this section.
\end{remark}

\begin{proof}
\begin{enumerate}
\item[(a)] The $\alpha$-equivariance of $B$ indicates that
\begin{align}
 \int_t (\Tilde{\Sigma}_1^\star B) (\partial_s, \partial_t) = \int_t (\Tilde{\Sigma}_2^\star R_g^\star B) (\partial_s, \partial_t)  = \alpha_{a^{-1}} \int_t (\Tilde{\Sigma}_2^\star B) (\partial_s, \partial_t) 
\end{align}
Using Lemma \ref{2-7}$(b)$, we obtain $(a)$ by the uniqueness of solution of differential equation.

\item[(b)] The second property is due to the cocycle condition of the ODE: $\Dot{h} = dR_h f$: that is, if we denote $h(s_0,s)$ as the solution of the ODE with $h(s_0,s_0) = e_H$, then we have $h(s_1,s_2) \cdot h(s_0,s_1) = h(s_0,s_2)$. (see \cite{2008arXiv0802.0663S} and \cite{2015JGP....95...28S} for more details.)

\item[(c)] Let us denote $g(s)$ as the parallel transport for the curve $\widetilde{\Sigma_2 \circ \Sigma_1}(s,1/2)$ and let $h_1(s)$ be the path-ordered exponential $\mathcal{P} \exp{\int_0^r \big( \int_t \Tilde{\Sigma}_1^\star B(\partial_s, \partial_t) \big)}$. Since the curve $\widetilde{\Sigma_2 \circ \Sigma_1}(s,0) = \Tilde{\Sigma}_2(s,0)$ is a horizontal lift, we have $A_{s,0}(\partial_s \Tilde{\Sigma}_1) = 0$. Thus by Proposition \ref{1-3}, $\int_t \Tilde{\Sigma}_1^\star F_A(\partial_s,\partial_t) = - A_{s,1}(\partial_s \Tilde{\Sigma}_1)$. Combining this identity with the fact that $\tau(B) = F_A$ and applying Lemma \ref{2-7}$(a)$, we conclude that $\tau(h_1) = g$ and $\alpha_g = \alpha_{\tau(h_1)} = Ad_{h_1}$, 

Note the relation $\Tilde{\Sigma}_2(s,0) = \widetilde{\Sigma_2 \circ \Sigma_1}(s,1/2) \cdot g(s)$ for $g(s)$ is the parallel transport. Since horizontal curves are uniquely determined when their starting points are fixed, this identity holds even further: for any fixed $s \in I$, the horizontal curve $\Tilde{\Sigma}_2(s,t)$ in $t$ direction equals $\widetilde{\Sigma_2 \circ \Sigma_1}(s,(1+t)/2) \cdot g(s)$. Together with the fact that $B$ is equivariant, we have:
\begin{align}
& \int_t \widetilde{\Sigma_2 \circ \Sigma_1}^\star B(\partial_s,\partial_t) = \int_t \Tilde{\Sigma}_1^\star B (\partial_s,\partial_t) + \int_t \Tilde{\Sigma}_2^\star R_{g(s)^{-1}} B (\partial_s,\partial_t) \notag \\
= & \int_t \Tilde{\Sigma}_1^\star B (\partial_s,\partial_t) + \int_t \Tilde{\Sigma}_2^\star \alpha_{g(s)} B (\partial_s,\partial_t) 
\end{align}
We simply write this identity as $\int_t B_{12} =  \int_t B_1 + \alpha_{g(s)} \int_t B_2$ and denote the path-ordered exponential associated with other two lifts by $h_{12}(s)$ and $h_2(s)$ respectively, then
\begin{align}
    dR_{h_1 \cdot h_2} \big( \int_t B_1 + \alpha_g \int_t B_2 \big) & =  dR_{h_2} \Dot{h}_1 + dR_{h_2} dR_{h_1}  Ad_{h_1} \int_t B_2 \notag \\
    & = dR_{h_2} \Dot{h}_1 +  dR_{h_2} dL_{h_1} \int_t B_2 = \frac{d}{ds} (h_1 \cdot h_2). 
\end{align}
Since $dR_{h_{12}} \int_t B_{12} = \Dot{h}_{12}$, we conclude, by the uniqueness of solution, that $tra(\Sigma_2 \circ \Sigma_1) = tra_(\Sigma_1) \cdot tra(\Sigma_2)$.
\end{enumerate}
\end{proof}

We are now in a position to show the generalized non-abelian Stokes' theorem. As mentioned in the introduction, this kind of theorem deal with the relationship between fake curvature $B$ and its 2-curvature $F_B$, and it reveals the dependence of surface holonomy on a smooth family of squares. Since we are considering a reformulated surface holonomy for squares with free edges, next theorem is a little bit different from its original form in higher gauge theory. Previous approaches to non-abelian Stokes' theorem can be seen in \cite{2010arXiv0710.4310F}, \cite{FP2011}, \cite{2015JGP....95...28S} and \cite{2018arXiv181110060V}. We attempt to make a new proof in the following case.

\begin{theorem}[Generalized Non-abelian Stokes' Theorem]\label{2-9}
Suppose $\Theta: \Sigma_0 \Rightarrow \Sigma_1$ is a cube between squares. With respect to a given standard lift $\Tilde{\Theta}$ of $\Theta$, the difference between $tra(\Sigma_0)$ and $tra(\Sigma_1)$ is revealed by $\mathcal{P} \exp \int_r \big( \int_s Ad_{h_r^{-1}} \int_t (\Tilde{\Theta}^\star F_B) (\partial_r, \partial_s, \partial_t) \big)$ (meaning of $h_r$ is explained in the proof). Moreover, if $\Sigma_0$ and $\Sigma_1$ are thin-homotopic, then they will share the same surface holonomy.
\end{theorem}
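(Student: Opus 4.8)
The plan is to treat this as a one-dimension-higher analogue of Theorem~\ref{1-4}, reducing the statement to the two-dimensional non-abelian Stokes' mechanism (Proposition~\ref{1-3}) together with the crossed-module bookkeeping of Lemma~\ref{2-7}, in the ``inductive'' spirit announced in the introduction. First I would slice the standard lift $\Tilde{\Theta}$ by $r$: by the remark after Definition~\ref{2-5}, each $\Tilde{\Theta}_r(s,t)=\Tilde{\Theta}(r,s,t)$ is a standard lift of $\Sigma_r$, so Definition~\ref{2-6} gives $tra(\Sigma_r)=h_r(1)$, where $h_r(s)$ solves $\Dot{h}_r=dR_{h_r}f_r$ with $h_r(0)=e_H$ and $f_r(s)=\int_t\Tilde{\Theta}^\star B(\partial_s,\partial_t)$. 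This running surface holonomy $h_r(s)$ is the element $h_r$ named in the statement, and the whole problem becomes the $r$-dependence of the terminal value $h_r(1)$.

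Next I would form the comparison element $tra(\Sigma_r)^{-1}\cdot tra(\Sigma_0)=h_r(1)^{-1}h_0(1)$, which is $e_H$ at $r=0$, and differentiate it in $r$. The variation (Duhamel) formula for the path-ordered exponential $h_r(s)=\mathcal{P}\exp\int_0^s f_r$ expresses $h_r(1)^{-1}\partial_r h_r(1)$ as $\int_s Ad_{h_r(s)^{-1}}\partial_r f_r(s)$, so the comparison element satisfies a first-order equation in $r$ and is therefore itself the path-ordered exponential in $r$ of this $s$-integral. The theorem then reduces to the single identity
\[
tra(\Sigma_r)^{-1}\cdot tra(\Sigma_0)=\mathcal{P}\exp\int_0^r\Big(\int_s Ad_{h_r(s)^{-1}}\int_t\Tilde{\Theta}^\star F_B(\partial_r,\partial_s,\partial_t)\Big),
\]
that is, to showing that $\int_t\Tilde{\Theta}^\star F_B(\partial_r,\partial_s,\partial_t)$ agrees with $\partial_r f_r(s)$ after conjugation by $Ad_{h_r(s)^{-1}}$ and integration in $s$, the overall sign being fixed by the orientation conventions of Section~\ref{1}.

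To establish this identity I would expand $F_B=dB+A\wedge_\alpha B$ on the coordinate frame $(\partial_r,\partial_s,\partial_t)$, whose brackets vanish, so that $dB(\partial_r,\partial_s,\partial_t)=\partial_r B(\partial_s,\partial_t)-\partial_s B(\partial_r,\partial_t)+\partial_t B(\partial_r,\partial_s)$. Integrating in $t$ produces the wanted term $\partial_r f_r(s)$ together with three kinds of spurious contributions: a $t$-boundary term $[B(\partial_r,\partial_s)]_{t=0}^{t=1}$, a total $s$-derivative $-\partial_s\int_t B(\partial_r,\partial_t)$, and the two $\alpha(A(\cdot),B(\cdot,\cdot))$ terms coming from $A\wedge_\alpha B$. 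The component with $A(\partial_t)$ drops immediately because $A(\partial_t)=0$ for a standard lift, and the $t$-boundary term vanishes because $\Theta(r,s,0)$ and $\Theta(r,s,1)$ are thin in $(r,s)$ by Definition~\ref{2-3}(a): horizontality of $B$ then forces $B(\partial_r,\partial_s)=0$ on those faces. The $s$-boundary contributions of the total $s$-derivative vanish for the analogous reason, using the thinness of $\Theta(r,0,t)$ and $\Theta(r,1,t)$ from Definition~\ref{2-3}(b).

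The step I expect to be the real obstacle is the cancellation of the remaining $\alpha$-terms against the commutator generated by the conjugation $Ad_{h_r(s)^{-1}}$. Differentiating $Ad_{h_r(s)^{-1}}$ in $s$ produces a bracket $[f_r,\,\cdot\,]$ in $\mathfrak{h}$, which by the differential crossed-module identity $\alpha(\tau(Y),Y')=[Y,Y']$, together with $\tau(B)=F_A$ and Lemma~\ref{2-7}(a) (identifying $\tau(h_r)$ with the $G$-parallel transport along $s$), is exactly of the shape $\alpha(A(\partial_s),\,\cdot\,)$. The delicate point is that the $\alpha(A(\partial_s),B(\partial_r,\partial_t))$ and $\alpha(A(\partial_r),B(\partial_s,\partial_t))$ terms carry connection components that vary along $t$, so one cannot factor the $H$-adjoint through the $\int_t$; reconciling the $t$-integration with the non-abelian conjugation is precisely where $\tau(B)=F_A$ and a fibrewise application of the two-dimensional mechanism of Proposition~\ref{1-3} must be used to make the conjugations telescope and the spurious terms collapse. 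Once this cancellation is carried out, the thin-homotopy clause is immediate: if $\Sigma_0$ and $\Sigma_1$ are thin-homotopic then the cube $\Theta$ has $\operatorname{rank} d\Theta\le 2$ everywhere, so the horizontal projections of $\partial_r,\partial_s,\partial_t$ span at most a plane and the three-form $\Tilde{\Theta}^\star F_B$ vanishes identically, making the comparison element trivial and forcing $tra(\Sigma_0)=tra(\Sigma_1)$.
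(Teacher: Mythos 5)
Your overall route is the paper's route in different clothing: the paper also slices the standard lift in $r$, sets $f_r(s)=\int_t \Tilde{\Theta}^\star B(\partial_s,\partial_t)$ with running holonomy $h_r(s)$, and measures the $r$-dependence of $h_r(1)$ against the conjugated $F_B$-integral. Where you invoke the Duhamel variation formula, the paper instead packages $a(\partial_s)=-\int_t B(\partial_s,\partial_t)$, $a(\partial_r)=-\int_t B(\partial_r,\partial_t)$ and $f(\partial_r,\partial_s)=-\int_t F_B(\partial_r,\partial_s,\partial_t)$ as a gauge field and candidate curvature on $I\times I$, proves $f=F_a$, pushes everything to the trivial $H$-bundle $(I\times I)\times H$, and applies the two-dimensional machinery of Theorem~\ref{1-4} and Proposition~\ref{1-3} to the lift $(r,s)\mapsto\big((r,s),h_r(s)\big)$ --- which yields exactly your variation identity, so this is a repackaging rather than a genuinely different argument. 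Your boundary bookkeeping ($A(\partial_t)=0$ on horizontal fibres, $B(\partial_r,\partial_s)=0$ on the $t\in\{0,1\}$ faces by Definition~\ref{2-3}(a), vanishing $s$-boundary terms by Definition~\ref{2-3}(b)) agrees with the paper's.

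There is, however, a real gap: the step you defer as ``the real obstacle'' is the entire content of the paper's proof, and your sketch of it points in the wrong direction. Since $Ad_{h_r(s)^{-1}}$ is constant in $t$, it never has to be factored through $\int_t$, and no ``telescoping of conjugations'' or fibrewise interchange arises; after your integration by parts in $s$, everything reduces to the pointwise-in-$(r,s)$ Lie-algebra identity
\begin{align}
    \int_t \Big( \alpha\big(A(\partial_r),B(\partial_s,\partial_t)\big) - \alpha\big(A(\partial_s),B(\partial_r,\partial_t)\big) \Big) = \Big[ \int_t B(\partial_r,\partial_t),\ \int_t B(\partial_s,\partial_t) \Big], \notag
\end{align}
with the conjugation sitting harmlessly outside. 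The paper proves this in two moves your proposal does not supply. First, $A_{r,s,t}(\partial_r)=\tau\big(\int_0^t B(\partial_r,\partial_t)\big)$ and $A_{r,s,t}(\partial_s)=\tau\big(\int_0^t B(\partial_s,\partial_t)\big)$, obtained from Proposition~\ref{1-3} together with initial conditions you never invoke: horizontality of $\Tilde{\Theta}_r(s,0)$ in $s$ and of $\Tilde{\Theta}(r,0,0)$ in $r$ (Definition~\ref{2-5}(b),(c)), plus thinness of the face $\Theta(r,s,0)$ to force $A_{r,s,0}(\partial_r)=-\int_s F_A(\partial_r,\partial_s)=0$ --- note this is a third use of the thinness conditions beyond the two boundary cancellations you list. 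Second, after applying $\alpha(\tau(Y),Y')=[Y,Y']$, the integrand is recognized as the exact derivative $\Dot{L}(t)$ of $L(t)=\big[\int_0^t B(\partial_r,\partial_t),\int_0^t B(\partial_s,\partial_t)\big]$, so the $t$-integral is evaluated by the fundamental theorem of calculus; no further structure is needed. With these two moves inserted, your Duhamel outline does assemble into a complete proof equivalent to the paper's; without them it stops exactly where the theorem lives. (Your thin-homotopy clause is fine, modulo the same slide the paper itself makes in reading ``$\Sigma_0,\Sigma_1$ thin-homotopic'' as ``the given $\Theta$ is itself thin.'')
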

\begin{proof}
Let us consider the differential equations $\Dot{h}_r = dR_{h_r} f_r$ indexed by $r \in I$ with $f_r(s) \vcentcolon = \int_t (\Tilde{\Theta}^\star B)_r(\partial s, \partial t)$. The integral is done along the curve $\Tilde{\Theta}_{r,s}(t)$ and we will abbreviate $(\Tilde{\Theta}^\star B)_r(\partial_s, \partial_t)$ to $B_r(\partial_s, \partial_t)$ in the following proof. The solutions $h_r(s)$ of these equations with $h_r(0) = e_H$ form a smooth family of maps. Our strategy is reducing this 3-dimensional problem to two dimension and then applying Theorem \ref{1-4}. 

The first step is defining a one form $a$ on $I \times I$ by setting $a_{r,s}(\partial_s) = -\int_t B(\partial_s, \partial_t)$ and $a_{r,s}(\partial_r) = -\int_t B(\partial_r, \partial_t)$. For each fixed $r \in I$, the map $h_r(s)$ can be seen as the solution of the rephrased ODE $-dR_{h_r}a(\partial s) = \Dot{h}_r$ now. Similarly, we reduce the 2-curvature $F_B = dB + A \wedge_\alpha B$ to a two form $f$ on $I \times I$ with $f_{r,s}(\partial_r, \partial_s) = -\int_t F_B(\partial_r, \partial_s, \partial_t)$. Pairing $f$ with other kinds of vectors is achieved by taking linear combination and applying antisymmetry of differential forms. We claim that $f$ satisfies the structure equation of $a$:
\begin{align}
    - F_a (\partial_r, \partial_s) = & - \partial_r a(\partial_s) +\partial_s a(\partial_r) - [a(\partial_r), a(\partial_s)] \notag \\
    = & \int_t \partial_r B (\partial_s, \partial_t) - \int_t \partial_s B(\partial_r, \partial_t) + [\int_t B(\partial_r, \partial_t), \int_t B(\partial_s, \partial_t)],  
\end{align}
      
\begin{align}
    - f (\partial_r, \partial_s) = & \int_t \big( \partial_r B(\partial_s, \partial_t) - \partial_s B(\partial_r, \partial_t) + \partial_t B(\partial_r, \partial_s) \big) + \notag \\ 
    & \int_t \Big( \alpha \big( A(\partial_r), B(\partial_s, \partial_t) \big) - \alpha \big( A(\partial_s),  B(\partial_r, \partial_t) \big ) + \alpha \big( A(\partial_t),  B(\partial_r, \partial_s) \big ) \Big). 
\end{align}
In the above identities, the last term of $ - f (\partial_r, \partial_s)$ vanishes for $A(\partial_t) = 0$ on these horizontal lifting curves. Since $B$ is a horizontal form, Condition \hyperref[(b)]{(b)} of cube $\Theta$ implies that $B_{r,s,i}(\partial_r, \partial_s) = 0$ when $i=0$ or $1$. Thus the integral $\int_t \partial_t B(\partial_r, \partial_s)$ also vanishes and the first two terms in $-F_a(\partial_r,\partial_s)$ and $-f(\partial_r,\partial_s)$ are identical. 

Now we should examine the equality of rest terms stemming from wedge products. On one hand, let us consider the map $L(t) \vcentcolon = \int_0^t B(\partial_r, \partial_t), \int_0^t B(\partial_s,\partial_t)]$. Its value at $t = 1$ is just the wedge product (Lie bracket) in $-F_a(\partial_r,\partial_s)$. Using the definition of limits and linearity of Lie brackets, we obtain the following identify for its derivative:
\begin{align}
    \Dot{L}(t) = & \lim_{\Delta t \to 0} \Big( [\frac{1}{\Delta t} \int_t^{t+\Delta t} B(\partial_r, \partial_t), \int_0^t B(\partial s, \partial t) ] + [\int_0^t B(\partial_r, \partial_t), \frac{1}{\Delta t} \int_t^{t+\Delta t} B(\partial_s, \partial_t) ] \notag \\
    & + [\frac{1}{\Delta t} \int_t^{t+\Delta t} B(\partial_r, \partial_t), \int_t^{t+\Delta t} B(\partial_s, \partial_t)] \Big) \notag \\
    = & [B(\partial_r,\partial_t), \int_0^t B(\partial_s, \partial_t)] + [\int_0^t B(\partial_r, \partial_t), B(\partial_s,\partial_t)]. 
\end{align}
On the other hand, since $\Tilde{\Theta}(r,0,0)$ is horizontal, $A_{r,0,0}(\partial_r) = 0$. Applying Proposition \ref{1-3} and noting the fact that $\Theta_(r,s,0)$ is a thin-homotopy, we have $A_{r,s,0}(\partial_r) = - \int_s F_A(\partial_r,\partial_s) = 0$. Besides, since $\Tilde{\Theta}_r(s,0)$ is horizontal in $s$ direction, $A_{r,s,0}(\partial_s) = 0$. Combining these facts and using Proposition \ref{1-3} and Lemma \ref{2-7}$(a)$, we have $A_{r,s,t}(\partial_r) = \tau \big( \int_0^t B(\partial_r, \partial_t) \big)$ and $A_{r,s,t}(\partial_s) = \tau \big( \int_0^t B(\partial_s, \partial_t) \big)$. 

Substituting these two identities into the wedge product contained in $-f(\partial_r,\partial_s)$, and applying the second property of differential crossed module (see Definition \ref{2-1}), we have:
\begin{align}
      & \int_t \Big( \alpha \big( A(\partial_r), B(\partial_s, \partial_t) \big) - \alpha \big( A(\partial_s),  B(\partial_r, \partial_t) \big ) \Big) \notag \\
     = &  \int_t \Big( \alpha \big( \tau \big( \int_0^t B(\partial_r, \partial_t) \big), B(\partial_s, \partial_t) \big) - \alpha \big( \tau \big( \int_0^t B(\partial_s, \partial_t) \big),  B(\partial_r, \partial_t) \big ) \Big) \notag \\
     = & \int_t \big( [\int_0^t B(\partial_r, \partial_t), B(\partial_s,\partial_t)] - [\int_0^t B(\partial_s, \partial_t), B(\partial_r,\partial_t)] \big). 
\end{align}
It is just the integral of $\Dot{L}(t)$ to $t=1$, and hence $f = F_a$. 

We are now at the last step. Let us consider the trivial $H$ bundle $(I \times I) \times H$. One-to-one correspondence between local gauge fields and connection (see Chapter $5$ of \cite{2017MATHGAUGE}) indicates that there is a pushforward connection $\Tilde{a}$ with curvature $F_{\Tilde{a}}$ on the trivial bundle induced from $a$ and $f$.

We view the identity map $id$ on $I \times I$ as a square. Similarly to our discussion at the end of Section \ref{1}, the function $h_r$ is just the parallel transport for path $id_r(s) = (r,s)$ with respect to the connection $\Tilde{a}$, and thus $\Tilde{id}(r,s) \vcentcolon = (id_r(s), h_r(s))$ is a horizontal lift for $id$ in the trivial bundle. According to the correspondence between local gauge fields and connection, we have:
\begin{align}
    & \Tilde{a}(\partial_r \Tilde{id}) = Ad_{h_r^{-1}} a \big( \pi_\star (\partial_r \Tilde{id}) \big) + \omega_H (\partial_r h_r) = Ad_{h_r^{-1}} a(\partial_r) + L_{h_r^{-1}} (\partial_r h_r), \\ 
    & F_{\Tilde{a}}(\partial_r \Tilde{id}, \partial_s \Tilde{id}) = Ad_{h_r^{-1}} f \big( \pi_\star (\partial_r \Tilde{id}, \partial_s \Tilde{id}) \big) = Ad_{h_r^{-1}} f (\partial_r, \partial_s),  
\end{align}
where $\omega_H$ is the Maurer-Cartan form on $H$. Condition \hyperref[(a)]{(a)} of cube $\Theta$ forces $a_{r,i}(\partial_r) = \int_t B_{r,i,t}(\partial_r,\partial_t) = 0$ when $i=0$ or $1$. Since $h_r(0) = e_H$, we get the following result by applying Proposition \ref{1-3} to the above two identities:
\begin{align}
    L_{h_r^{-1}(1)} \partial_r h_r(1) = -\int_s Ad_{h_r^{-1}} f(\partial_r, \partial_t) = \int_s  Ad_{h_r^{-1}} \int_t F_B(\partial_r, \partial_s, \partial_t).\label{(13)} 
\end{align}
After rewriting the above equation to its standard form: $\Dot{h} = dR_h f$ and taking path-ordered exponential, the proof is completed with the following non-abelian Stokes' formula:
\begin{align}
    \Big( \mathcal{P} \exp{\int_s \int_t B_r(\partial_s, \partial_t)} \Big)^{-1} \cdot \mathcal{P} \exp{\int_s \int_t B_0(\partial_s, \partial_t)}  = \mathcal{P} \exp{\int_0^r \int_s Ad_{h_r^{-1}} \int_t F_B(\partial_r, \partial_s, \partial_t) }. 
\end{align}
Since $h_r(1) = tra_(\Sigma_r)$, left hand side of this formula stands for the difference between surface holonomies. If $\Theta$ is a thin-homotopy, then at any point in the domain of $\Tilde{\Theta}$ either $\partial_t \Tilde{\Theta}$ vanishes or one of $\partial_r \Tilde{\Theta}$ and $\partial_s \Tilde{\Theta}$ is vertical for $\pi:P \rightarrow M$ is a horizontal projection. Therefore $F_B(\partial_r, \partial_s, \partial_t) = 0$ and $tra_(\Sigma_0) = tra_(\Sigma_1)$.
\end{proof}

By Definition \ref{2-2} and the second property of differential crossed module, values of $F_B$ are all found in the center of $\mathfrak{h}$ and so is the integral $\int_t F_B(\partial_r, \partial_s, \partial_t)$. Thus, if $H$ is a matrix group or compact, the above non-abelian Stokes's formula becomes
\begin{align}
    \Big( \mathcal{P} \exp{\int_s \int_t B_r(\partial_s, \partial_t)} \Big)^{-1} \cdot \mathcal{P} \exp{\int_s \int_t B_0(\partial_s, \partial_t)}  = \mathcal{P} \exp{\int_0^r \int_s \int_t F_B(\partial_r, \partial_s, \partial_t) }. \tag{$14^\star$} 
\end{align}
The thin-homotopy invariance, if spoken in the language of physics, says that any perturbation on the worldsheet of a string in the spacetime $M$ will not influence its interaction with the gauge boson field. (See \cite{2010RvMaP..22.1033C}, \cite{2011GReGr..43.2335B} and \cite{2017MATHGAUGE} for more historical remarks.)


\bigskip

Remember in the case of paths, parallel transport is described by horizontal lifting curves in principal bundle. In the following context, we formulate a similar configuration for parallel transport on squares by means of parallel transport bundle $\mathcal{P}_{tra}(P)$.

\begin{lemma}\label{2-10}
Suppose $(G,H,\alpha,\tau)$ is a crossed module, then the product $G \times H$ is still a Lie group with group multiplication defined as follows: for any $(g_1,h_1),(g_2,h_2) \in G \times H$, $(g_1,h_1) \cdot (g_2,h_2) \vcentcolon = (g_1 \cdot g_2, \ (\alpha_{g_2^{-1}} h_1) \cdot h_2)$. When equipped with this group structure, the produce will be denoted by $G \times_{\alpha} H$.
\end{lemma}

The proof is easy. For instance, identity in the product is just $(e_G,e_H)$ and the inverse of any $(g,h)$ is $(g^{-1}, \alpha_g h^{-1})$. This kind of semiproduct is used to show that any crossed module is equivalent to a 2-groupoid in 2-category theory (see \cite{2004hep.th...12325B} for instance), but it should be noted that the group multiplication concerned here is adjusted to fit in the following definition and different from one in the previous work.

\begin{definition}[The bundle $\mathcal{P}_{tra}(P) \times H$]\label{2-11}
Suppose $(G,H,\alpha,\tau)$ is a crossed module and suppose $P$ is a principal $G$-bundle with a connection $A$. Product space $\mathcal{P}_{tra}(P) \times H$ with natural projection $\Tilde{\pi}(\Tilde{\gamma}, h) \vcentcolon = \gamma$ is a $G \times_{\alpha} H$ bundle over the path space $\mathcal{P}(M)$. The right  $G \times_{\alpha} H$-action on $\mathcal{P}_{tra}(P) \times H$ is given as: $(\Tilde{\gamma}, h') \cdot (g,h) \vcentcolon = (\Tilde{\gamma} \cdot g, \ (\alpha_{g^{-1}} h') \cdot h)$. 
\end{definition}

This action is obviously free and transitive on each fiber of $\mathcal{P}_{tra}(P) \times H$. Like the case of principal bundle, we will formulate the notion of vertical and horizontal spaces of $\mathcal{P}_{tra}(P) \times H$. As the first step, it is necessary to define tangent spaces for parallel transport bundle. We adapt the following definition from previous work in \cite{1999CMaPh.204..493C} and \cite{2010RvMaP..22.1033C}.

\begin{definition}\label{2-12}
The tangent space $T_{\Tilde{\gamma}} \mathcal{P}_{tra}(P)$ at any point $\Tilde{\gamma} \in \mathcal{P}_{tra}(P)$ is defined as the collection of all tangent vector fields $\Tilde{v}$ along curve $\Tilde{\gamma}$ such that the equation $\partial_t A(\Tilde{v}(t)) = - F_A(\Dot{\Tilde{\gamma}}(t), \Tilde{v}(t))$ holds for any $t \in I$ (motivated by Proposition \ref{1-3}). The linearity of this equation ensures that $T_{\Tilde{\gamma}} \mathcal{P}_{tra}(P)$ is a vector space.
\end{definition}

\begin{definition}[Vertical Spaces for $\mathcal{P}_{tra}(P) \times H$]\label{2-13}
As a product, tangent space at any point $(\Tilde{\gamma}, h)$ of $\mathcal{P}_{tra}(P) \times H$ can be seen as $T_{\Tilde{\gamma}} \mathcal{P}_{tra}(P) \times T_hH$. Any vector $(\Tilde{v}, X_H) \in T_{\Tilde{\gamma}} \mathcal{P}_{tra}(P) \times T_hH$ is said to be \textbf{vertical} if $\Tilde{v}(t)$ is vertical in the bundle $P$ for each $t \in I$. Using equation in the above definition, we find that if $\Tilde{v}(t)$ is vertical pointwise, then $\Tilde{v}$ is just a restriction of certain fundamental vector field $\Tilde{X}_G$, induced from some $X_G \in \mathfrak{g}$, to the curve $\Tilde{\gamma}$. 
\end{definition}

Just like the case of principal bundle, in order to separate horizontal vectors from $T_{\Tilde{\gamma}} \mathcal{P}_{tra}(P) \times T_hH$, we need to choose a fake curvature $B$ on $P$.

\begin{definition}[Horizontal Spaces for $\mathcal{P}_{tra}(P) \times H$]\label{2-14}
Suppose $P$ is a principal bundle with a connection $A$ and a fake curvature $B$, any tangent vector $(\Tilde{v}, X_H)$ on $\mathcal{P}_{tra}(P) \times H$ is said to be \textbf{horizontal} with respect to $A$ and $B$ if:
\begin{enumerate}
    \item[(a)] $\Tilde{v}(0)$ is horizontal with respect to $A$.
    \item[(b)] $\int_t B(\Tilde{v},\Dot{\Tilde{\gamma}}) = \omega_H(X_H)$, where $\omega_H$ is the Maurer-Cartan form.
\end{enumerate}
It is straightforward to confirm that these horizontal vectors form a subspace of $T_{\Tilde{\gamma}} \mathcal{P}_{tra}(P) \times T_hH$. 
\end{definition}

\begin{lemma}\label{2-15}
At any point $(\Tilde{\gamma},h) \in \mathcal{P}_{tra}(P) \times H$, vertical and horizontal spaces defined above truly decompose the total tangent space.
\end{lemma}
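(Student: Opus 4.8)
Lemma 2-15 asserts that at each point $(\tilde\gamma, h)$ of the bundle $\mathcal{P}_{tra}(P) \times H$, the vertical subspace (Definition 2-13) and the horizontal subspace (Definition 2-14) genuinely decompose the full tangent space $T_{\tilde\gamma}\mathcal{P}_{tra}(P) \times T_h H$ as a direct sum. So I need to verify two things: that the two subspaces intersect only in zero, and that together they span the whole tangent space. Let me think about the dimensions and structure involved before planning the proof.

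**Setting up the structure.** The tangent space is $T_{\tilde\gamma}\mathcal{P}_{tra}(P) \times T_h H$. A vector is a pair $(\tilde v, X_H)$. The vertical condition asks that $\tilde v(t)$ be vertical in $P$ for every $t$; by Definition 2-13 such a $\tilde v$ is exactly the restriction of a fundamental vector field $\tilde X_G$ for some $X_G \in \mathfrak g$, and $X_H \in T_h H$ is completely unconstrained. The horizontal conditions are that $\tilde v(0)$ be horizontal with respect to $A$, and that $\int_t B(\tilde v, \dot{\tilde\gamma}) = \omega_H(X_H)$.

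**Plan.** First I would prove the intersection is trivial. Take $(\tilde v, X_H)$ lying in both subspaces. Verticality forces $\tilde v = \tilde X_G|_{\tilde\gamma}$ for some $X_G \in \mathfrak g$, so $\tilde v(0)$ is vertical; but horizontal condition (a) demands $\tilde v(0)$ be horizontal, and since $A$ is an honest connection (Theorem 1-2) vertical-and-horizontal forces $\tilde v(0) = 0$, whence $X_G = 0$ and $\tilde v \equiv 0$. With $\tilde v = 0$, horizontal condition (b) reads $0 = \omega_H(X_H)$, and since $\omega_H$ is an isomorphism on each tangent space of $H$, we get $X_H = 0$. So the intersection is zero. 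Second, for spanning, I would argue by a dimension/splitting count using Definition 2-12: given an arbitrary $(\tilde v, X_H)$, I want to write it as vertical plus horizontal. The natural move is to split $\tilde v(0) \in T_{\tilde\gamma(0)}P$ into its vertical part $\tilde v(0)^{\mathrm{vert}} = \tilde X_G|_{\tilde\gamma(0)}$ (giving $X_G = A(\tilde v(0))$) and its horizontal part. The fundamental field $\tilde X_G$ restricted along $\tilde\gamma$ is a vertical element of $T_{\tilde\gamma}\mathcal{P}_{tra}(P)$, and $\tilde v - \tilde X_G|_{\tilde\gamma}$ is then a vector field satisfying the same defining equation of Definition 2-12 (by linearity) with horizontal value at $t=0$, i.e.\ it meets horizontal condition (a). Then I adjust the $H$-component: set the horizontal piece's $X_H$ so that condition (b) holds, namely $X_H^{\mathrm{hor}} = \omega_H^{-1}\!\big(\int_t B(\tilde v - \tilde X_G, \dot{\tilde\gamma})\big)$, and absorb the remaining $H$-component into the vertical piece (which imposes no constraint on $X_H$).

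\textbf{Main obstacle.} The delicate point is confirming that the horizontal piece I construct really lands in $T_{\tilde\gamma}\mathcal{P}_{tra}(P)$, i.e.\ that subtracting the fundamental vector field $\tilde X_G$ preserves the defining ODE $\partial_t A(\tilde v(t)) = -F_A(\dot{\tilde\gamma}(t), \tilde v(t))$ of Definition 2-12. The fundamental field $\tilde X_G$ along a horizontal curve $\tilde\gamma$ satisfies $A(\tilde X_G) \equiv X_G$ (constant), so $\partial_t A(\tilde X_G) = 0$, while $F_A(\dot{\tilde\gamma}, \tilde X_G)$ involves a vertical argument and hence vanishes because $F_A$ is horizontal; thus $\tilde X_G|_{\tilde\gamma}$ is itself a legitimate element of $T_{\tilde\gamma}\mathcal{P}_{tra}(P)$ and the difference stays in the tangent space by linearity. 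Establishing this compatibility—that verticality in the pointwise sense is consistent with membership in the abstractly-defined path-space tangent space, and that the splitting at $t=0$ propagates to a global splitting of the vector field—is the crux; everything else is the routine bookkeeping of matching the $H$-component via the isomorphism $\omega_H$.
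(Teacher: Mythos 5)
Your proposal is correct and follows essentially the same route as the paper: the trivial-intersection argument via horizontal condition (a) and the isomorphism $\omega_H$, and the decomposition obtained by splitting $\Tilde{v}(0)$ into vertical and horizontal parts, restricting the fundamental vector field $\Tilde{X}_G$ along $\Tilde{\gamma}$, and matching the $H$-component through condition (b). In fact you make explicit a point the paper leaves implicit, namely that $\Tilde{X}_G\rvert_{\Tilde{\gamma}}$ satisfies the defining equation of Definition \ref{2-12} (since $A(\Tilde{X}_G)$ is constant and $F_A$ is horizontal), so the difference $\Tilde{v}-\Tilde{X}_G\rvert_{\Tilde{\gamma}}$ genuinely lies in $T_{\Tilde{\gamma}}\mathcal{P}_{tra}(P)$.
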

\begin{proof}
First, suppose vector $(\Tilde{v}, X_H) \in T_{\Tilde{\gamma}} \mathcal{P}_{tra}(P) \times T_hH$ is both vertical and horizontal. Then for any $t \in I$, $\Tilde{v}(t)$ has to be vertical in $P$. At $t = 0$. Condition $(a)$ of horizontal vector also forces $\Tilde{v}(0)$ be zero. Thus $\Tilde{v} \equiv 0$ and $X_H = 0$. 

Now, given an arbitrary vector $(\Tilde{v},X_H)$, let us show how to decompose it: at first, suppose $\Tilde{v}(0) = v_1 + v_2$, where $v_1$ is vertical and $v_2$ is horizontal. For the first component of $(\Tilde{v},X_H)$, let us consider the fundamental vector field $\Tilde{X}_G$ induced by $X_G \in \mathfrak{g}$ such that $\Tilde{X}_G = v_1$ at point $\Tilde{\gamma}(0)$, and let us denote the its restriction to $\Tilde{\gamma}$ by $\Tilde{v}_1$. For the second component, let us define $X_2 = dR_h \big( \int_t B(\Tilde{v},\Dot{\Tilde{\gamma}}) \big) \in T_hH$ (the integral also equals $\int_t B(\Tilde{v}- \Tilde{v}_1,\Dot{\Tilde{\gamma}})$ for $B$ is horizontal). Then by definition, $(\Tilde{v}_1, X_H-X_2)$ is vertical $(\Tilde{v}-\Tilde{v}_1, X_2)$ is horizontal.
\end{proof}

\begin{theorem}[Existence and Uniqueness of Horizontal Lifts for Squares]\label{2-16}
Suppose $P$ is a principal bundle with a connection $A$ and a fake curvature $B$. Let $\Sigma$ be any square. It can be seen as a path connecting each $\Sigma_s \in \mathcal{P}(M)$. (It is also understood as a trajectory or wroldsheet of free string.) Given any point $(\Tilde{\gamma}_0, h_0)$ lifted from $\Sigma_0$ into $\mathcal{P}_{tra}(P) \times H$, there is a unique pair $(\Tilde{\Sigma}, h)$ of smooth maps $\Tilde{\Sigma}(s,t) \in P$ and $h(s) \in H$ such that
\begin{enumerate}
    \item[(a)] $\Tilde{\Sigma}$ is a lift of $\Sigma$ into $P$ and $h(0) = h_0$.
    \item[(a)] For any $s \in I$, $(\partial_s \Tilde{\Sigma}_s, \Dot{h}(s))$ is horizontal with respect to $(A,B)$.
\end{enumerate}
Analogously to the case of paths, this pair $(\Tilde{\Sigma}, h)$ is a higher horizontal lift for square.
\end{theorem}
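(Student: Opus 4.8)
The plan is to decouple the problem into two sub-problems solved in sequence: first construct the $P$-component $\Tilde{\Sigma}$, then the $H$-component $h$. The key observation is that condition $(a)$ of Definition \ref{2-14} constrains only $\Tilde{\Sigma}$ (it requires $\partial_s \Tilde{\Sigma}(s,0)$ to be horizontal with respect to $A$), whereas condition $(b)$ involves $h$ but can be read as an ordinary differential equation for $h$ once $\Tilde{\Sigma}$ is known. Moreover, for $\partial_s \Tilde{\Sigma}_s$ to qualify as an element of $T_{\Tilde{\Sigma}_s} \mathcal{P}_{tra}(P)$ in the sense of Definition \ref{2-12}, each slice $\Tilde{\Sigma}_s$ must itself be a horizontal lift of $\Sigma_s$; combined with condition $(a)$, this says precisely that $\Tilde{\Sigma}$ is a standard lift in the sense of Definition \ref{2-4}.

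First I would construct $\Tilde{\Sigma}$. Since $(\Tilde{\gamma}_0, h_0)$ lies over $\Sigma_0$, the curve $\Tilde{\gamma}_0 \in \mathcal{P}_{tra}(P)$ is a horizontal lift of $\Sigma_0$ and is therefore determined by its starting point $\Tilde{\gamma}_0(0) \in \pi^{-1}(\Sigma(0,0))$. I take $\Tilde{\Sigma}$ to be the unique standard lift of $\Sigma$ with origin $\Tilde{\Sigma}(0,0) = \Tilde{\gamma}_0(0)$, whose existence and uniqueness from a prescribed origin were recorded in the Remark following Definition \ref{2-4}. The restriction $\Tilde{\Sigma}_0$ is then a horizontal lift of $\Sigma_0$ sharing the starting point $\Tilde{\gamma}_0(0)$, so by the uniqueness of horizontal lifts of a path from a fixed starting point it coincides with $\Tilde{\gamma}_0$, which matches the initial condition demanded in $(a)$. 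By Proposition \ref{1-3} the slice-wise horizontality ensures that each $\partial_s \Tilde{\Sigma}_s$ is a genuine tangent vector to $\mathcal{P}_{tra}(P)$, and being a standard lift supplies condition $(a)$ of horizontality for the pair $(\partial_s \Tilde{\Sigma}_s, \Dot{h})$.

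With $\Tilde{\Sigma}$ now fixed, condition $(b)$ of Definition \ref{2-14} becomes $\omega_H(\Dot{h}) = f(s)$, where $f(s) \vcentcolon = \int_t B(\partial_s \Tilde{\Sigma}_s, \Dot{\Tilde{\Sigma}}_s)$ is a prescribed smooth $\mathfrak{h}$-valued function of $s$. This is exactly the equation $\Dot{h} = dR_h f$ of the Basic Notation, whose solution with $h(0) = h_0$ exists, is unique, and is given by the path-ordered exponential $h(s) = \big( \mathcal{P} \exp \int_0^s \int_t B(\partial_s, \partial_t) \big) \cdot h_0$. Uniqueness of the whole pair is then immediate, since $\Tilde{\Sigma}$ is forced by standard-lift uniqueness and $h$ by uniqueness of solutions to this ODE. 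The step I expect to be the main obstacle is establishing joint smoothness of $(\Tilde{\Sigma}, h)$ in $(s,t)$: smoothness of $\Tilde{\Sigma}$ should follow from the smoothness built into $\mathcal{P}_{tra}(P)$ (Definition \ref{1-1}$(f)$) together with smooth dependence of horizontal lifts on parameters, after which smoothness of $h$ follows from smooth dependence of ODE solutions on $s$ through $f$. Checking that these smoothness inputs combine cleanly, rather than any algebraic manipulation, is where the care is required.
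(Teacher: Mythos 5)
Your proposal is correct and follows essentially the same route as the paper, which disposes of this theorem in a single remark after the statement: the horizontality conditions of Definition \ref{2-14} force $\Tilde{\Sigma}$ to be the standard lift determined by the origin $\Tilde{\gamma}_0(0)$ (Definition \ref{2-4} and its remark), and the second condition reduces to the ODE $\Dot{h} = dR_h \int_t B(\partial_s,\partial_t)$ with $h(0)=h_0$, solved by the path-ordered exponential $h(s) = \mathcal{P}\exp \big( \int_0^s \int_t \Tilde{\Sigma}^\star B(\partial_s,\partial_t) \big) \cdot h_0$. Your elaboration of the tangency requirement via Proposition \ref{1-3} and of the smoothness inputs (Definition \ref{1-1}$(f)$ plus smooth dependence of ODE solutions on parameters) supplies details the paper leaves implicit, but it is the same decomposition.
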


After so many discussions, it is immediate to see that $\Tilde{\Sigma}$ should be a standard lift and $h(s) = \mathcal{P} \exp{\int_0^s \big(\int_t \Tilde{\Sigma}^\star B (\partial_s, \partial_t) \big) \cdot h_0}$. Still like the case of path, any smooth square $\Sigma$ induces an $(G \times_\alpha H)$-equivariant map $tra_{\Sigma}$ between fibers $\mathcal{P}_{tra}(P) \times H \rvert_{\Sigma_0}$ and $\mathcal{P}_{tra}(P) \times H \rvert_{\Sigma_1}$ by Theorem \ref{2-16}. The map $tra_{\Sigma}$ is called \textbf{parallel transport of strings}.

\bigskip


At the end of Section \ref{1}, we introduce the basic process of categorifying parallel transport along paths. It is natural to speculate that parallel transport on squares can be categorified through 2-category theory. This has been done in \cite{2008arXiv0802.0663S}. Just like the preceding case, higher parallel transport functor can be thought as a collection of 2-functors $F_\alpha$ with thin-path 2-groupoid $\mathcal{P}_2(U_\alpha)$ as source and crossed module $(G,H,\alpha,\tau)$ as target. It encodes the local information of surface holonomy and can be constructed from Definition \ref{2-6}.

Suppose $\Sigma$ is a square contained in $U_\alpha$. squares are assumed to be pinned at endpoints here in consistent with the convention in higher gauge theory. The first step to calculate $tra(\Sigma)$ locally is still taking the lift $s_\alpha \circ \Sigma$. Then we solve the parallel transport $g_s$ of $-dR_{g_s} A_\alpha(\partial_t \Sigma_s) = \Dot{g}_s$ with $g_s(0) = e_H$ for each fixed $s$. The right action of $g_s$ on $s_\alpha \circ \Sigma$ yields a horizontal lift $\Tilde{\Sigma}$ of $\Sigma$. Since $\Sigma$ is pinned at endpoints, the lift is automatically standard. Thus by Definition \ref{2-6}, the surface holonomy $tra(\Sigma)$ with respect to $\Tilde{\Sigma}$ is $\mathcal{P} \exp{\int_s \big( \int_t (\Tilde{\Sigma}^\star B)(\partial_s, \partial_t) \big)}$. When we change fake curvature $B$ by its pullback form $B_\alpha$ for calculation, it is standard to have $tra(\Sigma) = \mathcal{P} \exp{ \int_s \big( \int_t \alpha_{g_s^{-1}} \Sigma^\star B_\alpha (\partial_r, \partial_t) \big) }$ (see Chapter 5 of \cite{2017MATHGAUGE} for more detials), and this formula is essentially the original definition of surface holonomy in \cite{2008arXiv0802.0663S}.

After figuring out how to compute surface holonomy locally, we are in a position to clarify the reason that horizontal composition formula claimed in Proposition \ref{2-8}$(c)$ is different from the classical one: $tra(\Sigma_2 \circ \Sigma_1) =  tra(\Sigma_1) \cdot \alpha_{g^{-1}} tra(\Sigma_2)$ (see \cite{2015JGP....95...28S} and \cite{2018arXiv181110060V} for more details). Suppose squares $\Sigma_1: \gamma_0 \Rightarrow \gamma_1$ and $\Sigma_2:\gamma_0' \Rightarrow \gamma_1'$ are smoothly compatible in the horizontal direction. Following the preceding process in calculating surface holonomy, we denote the standard lifts, obtained from $s_\alpha$ and $A_\alpha$, by $\widetilde{\Sigma_2 \circ \Sigma_1}$, $\Tilde{\Sigma_1}$ and $\Tilde{\Sigma}_2$ with corresponding parallel transports $g_{12,s}$, $g_{1,s}$ and $g_{2,s}$. The subtle difference is due to the choice of standard lifts of $\Sigma_2$. According to the convention in Proposition \ref{2-8}$(c)$, the lift should start at point $\widetilde{\Sigma_2 \circ \Sigma_1}(0,1/2)$, but this is not the case here. After checking carefully, we find that $\widetilde{\Sigma_2 \circ \Sigma_1}(0,1/2) = \Tilde{\Sigma}_2(0,0) \cdot g$, where $g = g_{1,0}(1)$ is the parallel transport of path $\gamma_0$ at its terminal point. Therefore, as suggested by Proposition \ref{2-8}$(a)$, these two formulas for horizontal composition have exactly the same meaning in $U_\alpha$ with respect to their different choices of standard lifts. 


\section{An Attempt to Parallel Transport on Cubes}\label{3}

This last section is devoted to the exploration of parallel transport on cubes. Study on this topic was first proposed by Mackaay and Picken in \cite{2011arXiv0907.2566FP}, where a systematic framework on encoding local 3-dimensional holonomies into Gray 3-functors was established. After that, research on higher 3-gauge theory was carried out in both mathematics and physics. We refer the reader to \cite{2014JMP....55d3506W}, \cite{2014LMaPh.104.1147S} and \cite{2019arXiv190407566R} for more details. In the case, it is necessary to introduce crossed 2-module for it gives rise to the notions of fake 2-curvature and 3-curvature. There is a brief summary on these concepts in the following context before our discussion. 

\subsection{Non-abelian Stokes Theorem in Four Dimension} \label{3.1}

\begin{definition}\label{3-1}
A \textbf{crossed 2-module} consists of a complex of Lie groups: $L \xrightarrow{\delta} H \xrightarrow{\partial} G$, two $G$-actions $\triangleright_H$ on $H$ and $\triangleright_L$ on $L$, and a smooth map $\{-,-\}: H \times H \rightarrow L$, called Peiffer lifting, such that the following conditions hold:
\begin{enumerate}
    \item[(a)] The Lie group morphisms $\delta$ and $\partial$ are equivariant: that is, for any $g \in G$, $h \in H$ and $l \in L$, $\delta(g \triangleright_L l) = g \triangleright_H \delta(l)$ and $\partial(g \triangleright_H h) = Ad_g \partial(l)$.
    
    \item[(b)] The Peiffer lifting $\{-,-\}$ is equivariant: that is, for any $g \in G$ and $h_1,h_2 \in H$, $g \triangleright_L \{h_1, h_2\} = \{g \triangleright_H h_1, g \triangleright_H h_2\}$.
    
    \item[(c)] For any $h_1, h_2 \in H$, $\delta\{h_1, h_2\} = \langle h_1, h_2 \rangle$, where $\langle -, - \rangle: H \times H \rightarrow H$ is defined by $\langle h_1, h_2 \rangle = \big( Ad_{h_1} h_2 \big) \cdot \partial(h_1) \triangleright_H h_2^{-1}$ and called the Peiffer commutator.
    
    \item[(d)] For any $l_1, l_2 \in L$, $[l_1,l_2] = \{\delta(l_1), \delta(l_2) \}$.
    
    \item[(e)] For any $h_1, h_2, h_3 \in H$, $\{h_1 \cdot h_2, \ h_3\} = \{h_1, \ Ad_{h_2}  h_3 \} \cdot \partial(h_1) \triangleright_L \{h_2,h_3\}$ and $\{h_1, \ h_2 \cdot h_3 \} = \{h_1, h_2 \} \cdot \{h_1, h_3\} \cdot \{\langle h_1, h_2 \rangle, \ \partial(h_1) \triangleright_H h_2 \}$.
    
    \item[(f)] For any $h \in H$ and $l \in L$, $\{ \delta(l), h \} \cdot \{ h, \delta(l) \} = l \cdot \big( \partial(h) \triangleright_L l^{-1} \big)$.
\end{enumerate}
Let $\triangleright'$ be an $H$-action on $L$ defined by $h \triangleright' l = l \cdot \{ \delta(l)^{-1}, h \}$. We can prove that $(H,L,\delta,\triangleright')$ forms a crossed module (see \cite{2011arXiv0907.2566FP}). However, this is not the case for $(G,H,\partial,\triangleright_H)$ generally, since the second condition on crossed module (Definition \ref{2-1}$(b)$) will never hold if the Peiffer commutator $\langle -,- \rangle$ is nontrivial.
\end{definition}

Similarly to the case of crossed module, when investigating at the infinitesimal level of crossed 2-module, we obtain the notion of differential crossed 2-module.

\begin{definition}\label{3-2}
Let $\mathfrak{l} \xrightarrow{\delta} \mathfrak{h} \xrightarrow{\partial} \mathfrak{g}$ be a complex of Lie algebras with two $\mathfrak{g}$-actions $\triangleright_H$ on $\mathfrak{h}$ and $\triangleright_L$ on $\mathfrak{l}$ by derivations, and let $\{-,-\}: \mathfrak{h} \times \mathfrak{h} \rightarrow \mathfrak{l}$ be a bilinear map called Peiffer lifting. All of these structures constitute a \textbf{differential crossed 2-module} if the following conditions hold: 
\begin{enumerate}
    \item[(a)] The Lie algebra morphisms $\delta$ and $\partial$ are equivariant: that is, for any $X \in \mathfrak{g}$, $v \in \mathfrak{h}$ and $x \in \mathfrak{l}$, $\delta(X \triangleright_L x) = X \triangleright_H \delta(x)$ and $\partial(X \triangleright_H v) = [X, \partial(v)]$.
    
    \item[(b)] The Peiffer lifting is equivariant: that is, for any $X \in \mathfrak{g}$ and $v_1,v_2,v_3 \in \mathfrak{h}$, $X \triangleright_L \{v_1, v_2\} = \{X \triangleright_H v_1, v_2\} + \{v_1, X \triangleright_H v_2\}$.
    
    \item[(c)] For any $v_1,v_2 \in H$, $\delta\{v_1, v_2\} = \langle v_1, v_2 \rangle$, where $\langle v_1, v_2 \rangle  \vcentcolon =  [v_1, v_2] - \partial(v_1) \triangleright_L v_2$ is the Peiffer commutator of $\mathfrak{h}$.
    
    \item[(d)] For any $x_1,x_2 \in \mathfrak{l}$, $[x_1, x_2] = \{ \delta(x_1),  \delta(x_2) \}$.
    
    \item[(e)] For any $v_1,v_2,v_3 \in \mathfrak{h}$, $\{ [v_1, v_2], v_3 \} = \partial(v_1) \triangleright_L \{v_2, v_3 \} + \{v_1, [v_2, v_3] \} - \partial (v_2) \triangleright_L \{v_1, v_3 \} - \{v_2, [v_1, v_3]\}$ and $\{v_1, [v_2,  v_3] \} = \{\langle v_1, v_2 \rangle, v_3 \} - \{ \langle v_1, v_3 \rangle, v_2 \}$.
    
    \item[(f)] $ \{ \delta(x), v \} + \{v, \delta(x) \} = - \partial(v) \triangleright_L x $, for any $v \in \mathfrak{h}$ and $x \in \mathfrak{l}$.
\end{enumerate}
Let $\triangleright'$ be an $\mathfrak{h}$-action on $\mathfrak{l}$ defined by $v \triangleright' x = - \{\delta(x), v \}$. As one might expect, the collection $(\mathfrak{h}, \mathfrak{l}, \delta, \triangleright')$ forms a differential crossed module, but $(\mathfrak{g}, \mathfrak{h}, \partial, \triangleright_H)$ may not because of the existence of Peiffer commutator.
\end{definition}

\begin{remark}
Generally, Peiffer lifting and Peiffer commutator appearing in differential crossed 2-module are only multilinear maps without further properties. In the present work, we wish to restrict ourselves to a simple case by assuming that all these maps are antisymmetric, just like Lie brackets. In this situation, the first identity in Condition $(e)$ and Condition $(f)$ from Definition \ref{3-2} become:
\begin{enumerate}
    \item[(e')] For any $v_1,v_2,v_3 \in \mathfrak{h}$, $\{ [v_1, v_2], v_3 \} = \{v_1, [v_2, v_3] \} - \{v_2, [v_1, v_3]\}$.
    
    \item[(f')] $\partial(v) \triangleright_L x = 0$, for any $v \in \mathfrak{h}$ and $x \in \mathfrak{l}$.
\end{enumerate}
\end{remark}

Suppose $(G,H,L,\delta,\partial,\triangleright_H,\triangleright_L,\{-,-\})$ is a crossed 2-module and let $P$ be a principal $G$ bundle with a connection $A$. The concepts of fake curvature, surface holonomy and parallel transport of strings still make sense under $(G,H,\partial,\triangleright_H)$ even this sub-structure from the 2-module fails to be a crossed module. However, Proposition \ref{2-8}$(c)$ and Theorem \ref{2-9} are false in this generality. Fortunately, we can remedy at least the Stokes' formula when the Peiffer commutator $\langle -,- \rangle$ is alternating.

\begin{proposition}\label{3-3}
If we define fake curvature and surface holonomy in the setting of crossed 2-module, then the non-abelian Stokes' formula in Theorem \ref{2-9} should be adjusted to:
\begin{align}
    \Big( \mathcal{P} \exp \int_s \int_t & B_r(\partial_s, \partial_t) \Big)^{-1} \cdot \mathcal{P} \exp{\int_s \int_t B_0(\partial_s, \partial_t)}  \notag \\
    & = \mathcal{P} \exp{\int_0^r \int_s Ad_{h_r^{-1}} \Big( \int_t F_B(\partial_r, \partial_s, \partial_t) + \big\langle \int_t B (\partial_r, \partial_t), \int_t B (\partial_s, \partial_t) \big\rangle \Big) }. \notag
\end{align}
It follows immediately that the thin-homotopy invariance of surface holonomy remains valid in this case.
\end{proposition}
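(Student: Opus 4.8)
The plan is to rerun the proof of Theorem \ref{2-9} essentially verbatim, replacing the crossed module data $(\tau,\alpha)$ by the $2$-module data $(\partial,\triangleright_H)$, and to isolate precisely the one step where the crossed module axiom was used. First I would carry over the reduction to a connection $a$ on $I\times I$ with $a(\partial_s)=-\int_t B(\partial_s,\partial_t)$ and $a(\partial_r)=-\int_t B(\partial_r,\partial_t)$, together with the candidate two-form $f(\partial_r,\partial_s)=-\int_t F_B(\partial_r,\partial_s,\partial_t)$. Every ingredient of that reduction survives unchanged, since it only uses that $B$ is $\mathfrak h$-valued, horizontal, and satisfies $\partial(B)=F_A$: the vanishing of the $\partial_t B(\partial_r,\partial_s)$ term via condition (b) of the cube, the vanishing of the $A(\partial_t)$-term by horizontality, and the identities $A(\partial_r)=\partial\big(\int_0^t B(\partial_r,\partial_t)\big)$ and $A(\partial_s)=\partial\big(\int_0^t B(\partial_s,\partial_t)\big)$ obtained from Proposition \ref{1-3} and Lemma \ref{2-7}(a).

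The single point that breaks is the conversion of the wedge terms. In Theorem \ref{2-9} the crossed module relation $\alpha(\tau(Y),Y')=[Y,Y']$ turned $\int_t \triangleright_H\big(A(\partial_r),B(\partial_s,\partial_t)\big)$ into a pure Lie bracket. For a crossed $2$-module this relation fails by exactly the Peiffer commutator: Definition \ref{3-2}(c) gives $\partial(X)\triangleright_H Y=[X,Y]-\langle X,Y\rangle$. Substituting the connection identities and using this, the wedge contribution to $-f$ splits into the old bracket part (which integrates, via the map $L(t)$ of Theorem \ref{2-9}, to $[\int_t B(\partial_r,\partial_t),\int_t B(\partial_s,\partial_t)]$) plus a new Peiffer part
\begin{align}
-\int_t \Big\langle \int_0^t B(\partial_r,\partial_t),\ B(\partial_s,\partial_t)\Big\rangle + \int_t \Big\langle \int_0^t B(\partial_s,\partial_t),\ B(\partial_r,\partial_t)\Big\rangle . \notag
\end{align}
I would then set $M(t)=\big\langle \int_0^t B(\partial_r,\partial_t),\ \int_0^t B(\partial_s,\partial_t)\big\rangle$ and compute $\dot M(t)$ by the Leibniz rule exactly as $L(t)$ is differentiated; invoking the \emph{antisymmetry} of $\langle-,-\rangle$ (the standing hypothesis of this proposition; see the remark following Definition \ref{3-2}) collapses $\dot M$ so that the displayed Peiffer part integrates to $-M(1)=-\big\langle \int_t B(\partial_r,\partial_t),\ \int_t B(\partial_s,\partial_t)\big\rangle$. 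Hence, rather than $f=F_a$, one now obtains $F_a=-\int_t F_B-\big\langle \int_t B(\partial_r,\partial_t),\ \int_t B(\partial_s,\partial_t)\big\rangle$.

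With the genuine curvature of $a$ identified, the last step of Theorem \ref{2-9} applies without change: pushing $a$ forward to a connection $\tilde a$ on the trivial $H$-bundle over $I\times I$ and applying the two-dimensional non-abelian Stokes' theorem (Theorem \ref{1-4}) to the left-logarithmic derivative of $h_r(1)$ yields the analogue of equation (13) but with $F_a$ in place of $f$. Because $F_a$ now carries the extra $-\langle\cdots\rangle$ summand, negating it reproduces exactly the integrand $\int_t F_B(\partial_r,\partial_s,\partial_t)+\big\langle \int_t B(\partial_r,\partial_t),\ \int_t B(\partial_s,\partial_t)\big\rangle$ of the claimed formula, and taking path-ordered exponentials finishes the derivation. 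For thin-homotopy invariance I would argue as before: at every point of a thin cube either $\partial_t\tilde\Theta$ vanishes or one of $\partial_r\tilde\Theta,\partial_s\tilde\Theta$ is vertical, so by horizontality of $B$ at least one factor of the Peiffer term vanishes together with $F_B(\partial_r,\partial_s,\partial_t)$, killing the whole integrand.

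The step I expect to be the crux is the bookkeeping of this new Peiffer part. One must be careful that $f$, as defined through $F_B$, is \emph{no longer} the curvature $F_a$, so the final Stokes' theorem must be fed $F_a$ and not $f$; and the clean identification of the correction as $\langle \int_t B,\ \int_t B\rangle$ rests entirely on antisymmetry, without which the two directional Peiffer integrals would not recombine into a single total derivative $\dot M$. A secondary delicacy to check is that the thin-homotopy cancellation is genuinely pointwise, since the Peiffer correction is a product of $t$-integrals rather than a single value of a three-form.
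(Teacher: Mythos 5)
Your proposal is correct and takes essentially the same route as the paper: the paper's proof likewise reruns Theorem \ref{2-9}, notes that the Peiffer commutator contributes the extra term $\int_t \big( \langle \int_0^t B(\partial_r,\partial_t), B(\partial_s,\partial_t)\rangle - \langle \int_0^t B(\partial_s,\partial_t), B(\partial_r,\partial_t)\rangle \big)$ to the structure-equation computation, and uses antisymmetry to recognize it as a total derivative so that $F_a = f - \langle a,a\rangle$ --- precisely your $M(t)$ argument --- after which the final step of Theorem \ref{2-9} is applied unchanged. The thin-homotopy delicacy you flag (the Peiffer correction being a bracket of $t$-integrals rather than a pointwise value of a three-form) is genuine, but the paper leaves it equally unaddressed, asserting only that the invariance ``follows immediately.''
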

\begin{proof}
As in Theorem \ref{2-9}, we need to define reduced forms $a$ and $f$ on $I \times I$. However, different from to the previous calculation, there is one more term appearing in $f(\partial_r, \partial_s)$ due to the Peiffer commutator:
\begin{align}
    \int_t \Big( \big\langle \int_0^t B(\partial_r, \partial_t), B(\partial_s , \partial_t) \big\rangle - \big\langle \int_0^t B(\partial_s, \partial_t), B(\partial_r , \partial_t) \big\rangle \Big). \notag
\end{align}
Thus $f$ fails to satisfy the structure equation of $a$ now, but this problem is easy to solve when the Peiffer commutator is antisymmetric. In this case $\langle a, a \rangle (\partial_r, \partial_s) \vcentcolon = \big\langle \int_t B (\partial_r, \partial_t), \int_t B (\partial_s, \partial_t) \big\rangle$ is a well-defined two form and it is straightforward to see that $F_a = f - \langle a, a \rangle$. Then using the same arguments in Theorem \ref{2-9}, we obtain the adjusted Stokes' formula. 
\end{proof}

The following definitions present the central geometric objects involving in $4$-dimensional non-abelian Stokes' theorem: smooth families of cubes or tesseracts, standard lifts of tesseracts, fake 2-curvature and its 3-curvature.

\begin{definition}[Tesseract]\label{3-4}
A \textbf{tesseract} $T: \Theta_0 \Rightarrow \Theta_1$ is a smooth homotopy between cubes such that the following conditions hold:
\begin{enumerate}
    \item[(a)] $T(q,r,s,i) = \Theta_0(r,s,i) = \Theta_1(r,s,i)$ when $i = 0$ or $1$. 
    \item[(b)] $T(q,r,i,t) = \Theta_0(r,i,t) = \Theta_1(r,i,t)$ when $i = 0$ or $1$.
    \item[(c)] $T(q,i,s,t): \Theta_0(i,s,t) \Rightarrow \Theta_1(i,s,t)$ is a thin-homotopy when $i=0$ or $1$.
\end{enumerate}
It should be noted that, forced by conditions on cubes (see Definition \ref{2-3}), $T(q,r,s,i)$ and $T(q,r,i,t)$ are actually 2-dimensional thin-homotopies when $i=0$ or $1$. Given any tesseract $T$, a lift $\Tilde{T}$ from $T$ is said to be \textbf{standard} if:
\begin{enumerate}
    \item[(a)] For any fixed $q,r,s \in I$, $\Tilde{T}_{q,r,s}(t)$ is horizontal.
    \item[(b)] For any fixed $q,r \in I$, $\Tilde{T}_{q,r}(s,0)$ is horizontal.
    \item[(c)] For any fixed $q \in I$, $\Tilde{T}_q(r,0,0)$ is horizontal.
    \item[(d)] The curve $\Tilde{T}(q,0,0,0)$ is horizontal.
\end{enumerate}
\end{definition}

\begin{definition}\label{3-5}
Suppose $(G,H,L,\delta,\partial,\triangleright_H,\triangleright_L, \{-,-\})$ is a crossed 2-module. Let $P$ be a principal $G$ bundle with a connection $A$ and a fake curvature $B$. A \textbf{fake 2-curvature} $C$ is a $\mathfrak{l}$-valued three form on bundle $P$ satisfying the following conditions:
\begin{enumerate}
    \item[(a)] $C$ is a $\triangleright_L$-equivariant and horizontal form.
    \item[(b)] $\delta(C) = F_B$.  
\end{enumerate}
The \textbf{3-curvature} $F_C$ of $C$ is defined as $dC + A \wedge_{\triangleright_L} C + \{B \wedge B \}$, where $\{B \wedge B \}$ is the wedge produce induced by $\{-,-\}$. Similarly to the case of $2$-curvatures, $dC + A \wedge_{\triangleright_L} C$ is $\triangleright_L$-equivariant and horizontal and so is $F_C$. (See \cite{2011arXiv0907.2566FP} for more details.)
\end{definition}

\begin{theorem}[Non-abelian Stoke' Theorem in Four Dimension]\label{3-6}
Suppose $T: \Theta_0 \Rightarrow \Theta_1$ is a tesseract. With respect to a given standard lift $\Tilde{T}$ of $T$, the $4$-dimensional non-abelian Stoke' formula is written as (notations are explained in the proof):
\begin{align}
    \bigg( \mathcal{P} \exp \int_r \int_s \Big( h_{q,r}(s)^{-1} \triangleright'  \big( \int_t C_q \rvert_{rst} - \{\int_t B_q \rvert_{rt}, \int_t B_q \rvert_{st} \} & \big) \Big) \bigg)^{-1} \notag \\
    \cdot \mathcal{P} \exp \int_r \int_s \Big( h_{0,r}(s)^{-1} \triangleright'  \big( \int_t C_0 \rvert_{rst} -  \{\int_t B_0 \rvert_{rt}, & \int_t B_0 \rvert_{st} \}  \big) \Big)  \notag \\
    = \mathcal{P} \exp \int_0^q \int_r Ad_{(l_q(r))^{-1}} \int_s h_{q,r}(s)^{-1} \triangleright' \Big( \int_t & \big( (F_C)_{qrst} - \{ B \wedge B \}_{qrst} \big) \Big) . \notag
\end{align}
\end{theorem}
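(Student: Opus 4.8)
The plan is to mirror the proof of Theorem \ref{2-9} one dimension higher, carrying out the argument inside the crossed module $(H, L, \delta, \triangleright')$ of Definition \ref{3-1} rather than inside the substructure $(G, H, \partial, \triangleright_H)$. The guiding dictionary is the following: the connection $A$ is replaced by the $\mathfrak{h}$-valued datum $\int_t B(\partial_\bullet, \partial_t)$, whose parallel transport in the $s$-direction is the surface holonomy $h_{q,r}(s)$; the fake curvature $B$ is replaced by a reduced $\mathfrak{l}$-valued form built from $C$; and the fake-curvature relation $\partial(B) = F_A$ is replaced by $\delta(C) = F_B$ (Definition \ref{3-5}). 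Since $(H, L, \delta, \triangleright')$ is an honest crossed module, once the reduced data are assembled the conclusion should follow by applying Theorem \ref{2-9} essentially verbatim over the trivial $H$-bundle on $I^3$, the output now being an $L$-valued rather than $H$-valued holonomy.

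First I would integrate out the innermost variable $t$ to reduce, for each $q$, to a three-dimensional problem on $(q,r,s) \in I^3$. I set an $\mathfrak{h}$-valued one-form $b$ with $b(\partial_\bullet) = -\int_t B(\partial_\bullet, \partial_t)$, whose $s$-direction parallel transport is $h_{q,r}(s)$, and an $\mathfrak{l}$-valued two-form $c$ with $c(\partial_r,\partial_s) = -\big(\int_t C(\partial_r,\partial_s,\partial_t) - \{\int_t B(\partial_r,\partial_t), \int_t B(\partial_s,\partial_t)\}\big)$. The $L$-valued path-ordered exponential of $c$, dressed by $h_{q,r}(s)^{-1}\triangleright'$, is exactly the volume holonomy of $\Theta_q$ appearing on the left-hand side. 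The decisive fake-curvature identity $\delta(c) = F_b$ then follows from $\delta(C) = F_B$ together with Proposition \ref{3-3}: reducing $t$ produces a Peiffer-commutator term $\langle\int_t B, \int_t B\rangle$ inside the curvature $F_b$ of $b$, and this is cancelled precisely by $\delta$ of the correction $\{\int_t B, \int_t B\}$ through the relation $\delta\{v_1, v_2\} = \langle v_1, v_2\rangle$ (Definition \ref{3-2}$(c)$).

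The computational heart, paralleling the identity $f = F_a$ in Theorem \ref{2-9}, is the structure equation for $c$: that its $2$-curvature $F_c = dc + b \wedge_{\triangleright'} c$ in $(H,L,\delta,\triangleright')$ reduces to $\int_t\big((F_C)_{qrst} - \{B\wedge B\}_{qrst}\big)$ after the same $h_{q,r}(s)^{-1}\triangleright'$ dressing. For this I would expand $F_C = dC + A \wedge_{\triangleright_L} C + \{B\wedge B\}$ and invoke the horizontality relations $A(\partial_\bullet) = \partial\big(\int_0^t B(\partial_\bullet,\partial_t)\big)$, obtained from Proposition \ref{1-3}, Lemma \ref{2-7} and the boundary conditions of Definition \ref{3-4}. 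Two features of the antisymmetric crossed $2$-module are then decisive: condition $(f')$, stating $\partial(v) \triangleright_L x = 0$, collapses $A \wedge_{\triangleright_L} C$ on the horizontal lifts, leaving $F_C - \{B\wedge B\}$ equal there to $dC$, whose $t$-reduction yields $F_c$; and condition $(e')$ together with the antisymmetry of the Peiffer lifting makes the $t$-boundary and cross terms produced by the Leibniz rule cancel, exactly as the derivative $\dot{L}(t)$ was matched against the wedge terms in Theorem \ref{2-9}.

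Finally, with $\delta(c) = F_b$ and the structure equation in hand, I apply Theorem \ref{2-9} in the crossed module $(H,L,\delta,\triangleright')$ to the reduced data on $(q,r,s)$: here $q$ is the homotopy parameter, the role of the factor $h_r$ is taken by the $L$-valued volume holonomy $l_q(r)$, and the adjoint factor $Ad_{h_r^{-1}}$ becomes $Ad_{(l_q(r))^{-1}}$. The tesseract conditions forcing $T(q,r,s,i)$ and $T(q,r,i,t)$ to be thin-homotopies at $i=0,1$ (Definition \ref{3-4}) make the corresponding components of $b$ vanish on the boundary, just as the cube conditions did in Theorem \ref{2-9}, so the endpoint contributions drop out and the stated identity emerges. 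I expect the main obstacle to be exactly this structure-equation step: keeping track simultaneously of the inner $H$-action $\triangleright'$ and the outer $L$-conjugation $Ad$, and showing that every Peiffer-lifting term generated by differentiating the nested $t$-integrals reorganizes, through $(e')$ and the antisymmetry hypotheses, into precisely the correction $-\{B\wedge B\}$ on the right-hand side.
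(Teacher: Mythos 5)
Your proposal is correct and follows essentially the same route as the paper's proof: reduce over $t$ to a problem on the trivial $H$-bundle over $I^3$, work in the crossed module $(H,L,\delta,\triangleright')$ with the Peiffer-corrected $\mathfrak{l}$-valued form (your $c$ is exactly the paper's $b' = b - \{a,a\}$, introduced there after observing that $\delta(b) = f \neq F_a$), verify the fake-curvature relation $\delta(b') = F_a$ via Definition \ref{3-2}$(c)$, use conditions $(e')$, $(f')$ and the antisymmetry hypotheses to identify the reduced $2$-curvature with $\int_t \big( (F_C)_{qrst} - \{B \wedge B\}_{qrst} \big)$, and then rerun Theorem \ref{2-9} with $l_q(r)$ in place of $h_r$, the tesseract boundary and thin-homotopy conditions of Definition \ref{3-4} killing the endpoint contributions exactly as you indicate. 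The only difference is cosmetic: you posit the corrected form from the outset, whereas the paper first reduces $C$ to $b$ and introduces the correction $\{a,a\}$ when the naive fake-curvature relation fails.
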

\begin{proof}
As one might expect, the argumentation style used here is similar to that in Theorem \ref{2-9}. First, Let us define a $\mathfrak{l}$-valued two form $b$ on $I \times I \times I$  by $b_{q,r,s}(\partial_q, \partial_r) = - \int_t C(\partial_q, \partial_r, \partial_t)$, $b_{q,r,s}(\partial_r, \partial_s) = - \int_t C(\partial_r, \partial_s, \partial_t) $ and $b_{q,r,s}(\partial_q, \partial_s) = - \int_t C(\partial_q, \partial_s, \partial_t)$. In a similar way, we reduce $B$ and $F_B$ to $a$ and $f$ respectively. Although we are working on $I \times I \times I$, it is not hard to show, by Definition \ref{3-4}$(a)$, that the identity $F_a = f - \langle a, a \rangle$ still holds. Correspondence between local gauge fields and connections also enables us to push $a$ and $F_a$ forward to a connection $\Tilde{a}$ and curvature $F_{\Tilde{a}}$ on the trivial bundle $(I \times I \times I) \times H$.

We want to perform Theorem \ref{2-9} on this trivial bundle but the pushforward form $\Tilde{b}$ of $b$ is not a fake curvature with respect to $\Tilde{a}$ for $\delta(b) = f$ which is not equal to $F_a$ now. In order to solve this problem, let us introduce a new form $b' \vcentcolon = b - \{a,a\}$. Condition $(c)$ in Definition \ref{3-2} shows that $\delta(b - \{a,a\}) = f - \langle a, a \rangle = F_a$, and hence the pushforward of $b'$ is a fake curvature with 2-curvature pushed from $F_{b'} \vcentcolon = db - d \{a,a\} + a \wedge_{\triangleright'} b - a \wedge_{\triangleright'} \{a,a\}$. We claim that $d \{a,a\} - a \wedge_{\triangleright'} b + a \wedge_{\triangleright'} \{a,a\}$ vanishes, which means that $F_{b'} = db$. We adopt the notation $C_{qrt}$ instead of $C(\partial_q, \partial_r, \partial_t)$ in the following long computation:
\begin{align}
    (d \{a,a\})_{qrs} = & \{\int_t \partial_q B_{rt}, \int_t B_{st} \} +  \{\int_t B_{rt}, \int_t \partial_q B_{st} \} - \{\int_t \partial_r B_{qt}, \int_t B_{st} \} \notag \\
    & - \{\int_t B_{qt}, \int_t \partial_r B_{st} \} + \{\int_t \partial_s B_{qt}, \int_t B_{rt} \} + \{\int_t B_{qt}, \int_t \partial_s B_{rt} \}. 
    \end{align}    
Definition of $\triangleright'$ indicates that   
    \begin{align}
    -(a \wedge_{\triangleright'} b)_{qrs} = & \big( \int_t B_{qt} \big) \triangleright' \big( \int_t C_{rst} \big) - \big( \int_t B_{rt} \big) \triangleright' \big( \int_t C_{qst} \big) + \big( \int_t B_{st} \big) \triangleright' \big( \int_t C_{qrt} \big) \notag \\
    = & - \{ \int_t (F_B)_{rst}, \int_t B_{qt} \} + \{ \int_t (F_B)_{qst}, \int_t B_{rt} \} - \{ \int_t (F_B)_{qrt}, \int_t B_{st} \}. 
\end{align}
Using the definition of $\triangleright'$ again and noting Conditions $(b)$ and $(c)$ in Definition \ref{3-2}, the last term $(a \wedge_{\triangleright'} \{a,a\})_{qrs}$ equals
\begin{align}
    & - \{ [\int_t B_{rt}, \int_t B_{st} ], \int_t B_{qt} \} + \{ [ \int_t B_{qt}, \int_t B_{st} ], \int_t B_{rt} \} - \{ [ \int_t B_{qt}, \int_t B_{rt} ], \int_t B_{st} \} \notag \\
    & + \{ \int_t \Big( A_r \triangleright_H \big( B_{st} \big) - A_s \triangleright_H \big( B_{rt} \big) \Big),  \int_t B_{qt} \} - \{ \int_t \Big( A_q \triangleright_H \big( B_{st} \big) - A_s \triangleright \big( B_{qt} \big)\Big),  \int_t B_{rt} \} \notag \\
    & + \{ \int_t \Big( A_q \triangleright_H \big( B_{rt} \big) - A_r \triangleright_H 
    \big( B_{st} \big)\Big),  \int_t B_{st} \} 
\end{align}
Terms including Lie brackets in the above computation cancel out by Condition $(e')$ after Definition \ref{3-2}. Using the antisymmetry of $\{-,-\}$ on the first identity, expanding $F_B$ by its definition in the second identity and then adding these three identities together, we see the result is zero and thus confirm the claim.

Let us now consider the identity map $id$ of $I \times I \times I$ with a lift $\Tilde{id}$ on the trivial bundle defined by $\Tilde{id}(q,r,s) = (q,r,s, h_{q,r}(s))$, where $h_{q,r}(s)$ is the path-ordered exponential solved from $\int_t B_{q,r}(\partial_s, \partial_t)$. One may find that the map $id$ fails to be a cube in the sense of Definition \ref{2-3}, but the lift $\Tilde{id}$ has all the ingredients for us to carry out the argumentation in Theorem \ref{2-9}: we need all curves $\Tilde{id}_{q,r}(s)$ to be horizontal with respect to $\Tilde{a}$, but this is straightforward from the constriction of $\Tilde{id}$. For any fixed $q,r \in I$, both $a_{q,r,0}(\partial_r) = - \int_t B_{q,r,0,t}(\partial_r, \partial_t)$ and $a_{q,r,0}(\partial_q) = - \int_t B_{q,r,0,t}(\partial_q, \partial_t)$ vanish by Definition \ref{3-4}$(b)$. Finally, since $T(q,i,s,t)$ is a thin-homotopy for $i = 0$ or $1$, $\int_t C_{q,i,s,t}(\partial_q, \partial_s,\partial_t) - \{\int_t B_{q,i,s,t}(\partial_q,\partial_t), \int_t B_{q,i,s,t}(\partial_s,\partial_t) \} = 0$ and this ensures the validity of Equation \hyperref[(13)]{(13)} in Theorem \ref{2-9}. Let $l_q(r)$ denote the path-ordered exponential $\mathcal{P} \exp{ \int_0^r \int_s \Tilde{b}'_q(\partial_r, \partial_s)}$ of fake curvature $\Tilde{b}'$. Substituting $\Tilde{b}'$, $F_{\Tilde{b}'}$ and $l_q(s)$ into the original Stokes' formula in Theorem \ref{2-9}, we have:
\begin{align}
    \Big( \mathcal{P} \exp \int_r \int_s \big( h_{q,r}(s)^{-1} \triangleright' b'_q & \rvert_{rs} \big)  \Big)^{-1} \cdot  \mathcal{P} \exp{\int_r \int_s \big(  h_{0,r}(s)^{-1} \triangleright' b'_0 \rvert_{rs} \big) } \notag \\
    = & \mathcal{P} \exp{\int_0^q \int_r Ad_{(l_q(r))^{-1}} \int_s h_{q,r}(s)^{-1} \triangleright' (F_{b'})_{qrs} }. 
\end{align}

The last step is finding the relationship between $F_{b'}$ and $F_C$, Let $K_C$ denote the form $dC + A \wedge_{\triangleright} C$. Its reduced form $k$ on $I \times I \times I$ satisfies the following identity:

\begin{align}
    & k_{qrs} = \int_t (dC)_{qrst} + \int_t \big( A_q \triangleright_L C_{rst} \big) - \int_t \big( A_r \triangleright_L C_{qst} \big) + \int_t \big( A_s \triangleright_L C_{qrt} \big) \notag \\
    = & \int_t (dC)_{qrst} + \int_t \big( \partial (\int_0^t B_{qt} ) \triangleright_L C_{rst} \big) -  \int_t \big( \partial (\int_0^t B_{rt} ) \triangleright_L C_{qst} \big)  +  \int_t \big( \partial (\int_0^t B_{st} ) \triangleright_L C_{qrt} \big) \notag \\
    = & \int_t (dC)_{qrst}. 
\end{align}
This is because the antisymmetry of $\{ - , - \}$ forces the action $\partial(v) \triangleright_L x = 0$ for any $v \in \mathfrak{h}$ and $x \in \mathfrak{l}$. Therefore, in the reduced level, $(F_{b'})_{qrs} = k_{qrs} = \int_t \big( (F_C)_{qrst} - \{ B \wedge B \}_{qrst} \big)$. Finally, by making substitutions, we obtain the formula claimed in the beginning of this theorem.
\end{proof}

It would be more concise if we simple write $K_C$ but not $F_C - \{ B \wedge B \}$ in the formula. However, we prefer the current notation since the Peiffer lifting $\{-,-\}$ is shown explicitly on both sides of the Stokes' formula. It can be thought to cancel out commutators occurring in crossed 2-module.

Let us digress to physics for a while. Motivated by previous case of squares, it is easy to thought cubes as trajectories of branes in spacetime. However, for any cube $\Theta$ concerned here, since $\Theta(r,s,i)$ and $\Theta(r,i,t)$ are 2-dimensional thin-homotopies, generally $\Theta$ can only represent vibration but not free movement of branes. The thin-homotopy condition is necessary and cannot be dropped from Thereom \ref{3-6}, so it is expected to see further research relaxing all these restrictions.


\subsection{Volume Holonomy}\label{3.2}

We finish this article by introducing a volume holonomy formula yielded from the previous theorem and discussing some of its basic properties.

\begin{definition}[Volume Holonomy]\label{3-7}
Let $\Theta$ be any cube with a standard lift $\Tilde{\Theta}$. The \textbf{volume holonomy} $tra(\Theta)$ of $\Theta$ with respect to $\Tilde{\Theta}$ is defined as:
\begin{align}
    \mathcal{P} \exp \int_r \int_s \Big( h_r(s)^{-1} \triangleright' \big( \int_t C_{rst} - \{ \int_t B_{rt}, \int_t B_{st} \} \big) \Big), \notag 
\end{align}
where $h_{r}(s) = \mathcal{P} \exp{\int_0^s \int_t B_r(\partial_s, \partial_t)}$. 
\end{definition}

Volume holonomy also has properties similar to those in Proposition \ref{2-8}. Before making the proof, let us investigate some useful facts related to crossed 2-module.
 
\begin{lemma}\label{3-8}
Let us consider the $G$-actions $\triangleright_H: G \times H \rightarrow H$, $\triangleright_L: G \times L \rightarrow L$ and the $H$-action $\triangleright' : H \times L \rightarrow L$. With respect to any fixed $g \in G$ and $h \in H$, the differentials $g \, \triangleright_H$, $g \,\triangleright_L$ and $h \, \triangleright'$ on corresponding Lie algebras have the following properties:
\begin{enumerate}
    \item[(a)] For any $v_1,v_2 \in \mathfrak{h}$, $\{g \triangleright_H v_1, g \triangleright_H v_2 \} = g \triangleright_L \{v_1, v_2\}$ (it is different from the equivariance formula Definition \ref{3-2}$(b)$). 
    \item[(b)] For any $x \in \mathfrak{l}$, $(g \triangleright_H h) \triangleright' (g \triangleright_L x) = g \triangleright_L (h \triangleright' x)$.   
\end{enumerate}
\end{lemma}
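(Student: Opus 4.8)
The statement asks me to verify two algebraic identities about the differentials of the group actions in a crossed 2-module. The plan is to derive both parts by differentiating the corresponding group-level equivariance conditions, treating the fixed elements $g \in G$ and $h \in H$ as constants while differentiating along curves through the identity in the remaining arguments. The key conceptual point—flagged by the parenthetical remark in part (a)—is that here $g$ is held \emph{fixed} and I differentiate only in the $H$-arguments, whereas the equivariance axiom in Definition \ref{3-2}(b) is the infinitesimal statement where one also differentiates in the $G$-slot (producing the Leibniz-type sum of two bracket terms). So these are genuinely different identities and must come from different group-level sources.

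For part (a), I would start from the group-level equivariance of the Peiffer lifting, Definition \ref{3-1}(b), namely $g \triangleright_L \{h_1, h_2\} = \{g \triangleright_H h_1, \, g \triangleright_H h_2\}$, valid for all $g \in G$ and $h_1, h_2 \in H$. Fixing $g$ and choosing smooth curves $h_i(u)$ in $H$ with $h_i(0) = e_H$ and $\dot h_i(0) = v_i \in \mathfrak{h}$, I differentiate both sides at $u = 0$. On the left, since $g \triangleright_L$ is a fixed linear map on $\mathfrak{l}$, the derivative passes through to give $g \triangleright_L \{v_1, v_2\}$ (using bilinearity of the Peiffer lifting and that $\{-,-\}$ vanishes when either entry is the identity/zero). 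On the right, differentiating $\{g \triangleright_H h_1(u), \, g \triangleright_H h_2(u)\}$ and using that the differential of the fixed map $g \triangleright_H$ sends $v_i$ to $g \triangleright_H v_i$, I obtain $\{g \triangleright_H v_1, \, g \triangleright_H v_2\}$. This yields (a) directly. The mild subtlety is bookkeeping: one should differentiate in each $H$-slot separately (or use a two-parameter family and take the mixed term), but bilinearity makes this routine.

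For part (b), I would begin from the compatibility of the actions at group level. The natural source is the relation expressing that $\triangleright'$ is built $G$-equivariantly from the other data; concretely, from the definition $h \triangleright' l = l \cdot \{\delta(l)^{-1}, h\}$ together with equivariance of $\delta$ and of the Peiffer lifting, one gets the group identity $(g \triangleright_H h) \triangleright' (g \triangleright_L l) = g \triangleright_L (h \triangleright' l)$. I would then fix $g$ and $h$ and differentiate along a curve $l(u)$ in $L$ with $\dot l(0) = x$. Because $g \triangleright_L$, $h \triangleright'$, and $(g \triangleright_H h) \triangleright'$ are all fixed linear maps on the relevant Lie algebras, differentiation simply replaces each group element $l$ by its tangent $x$ and each action by its differential, giving $(g \triangleright_H h) \triangleright' (g \triangleright_L x) = g \triangleright_L (h \triangleright' x)$.

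The main obstacle is not any single computation but establishing the correct group-level identity underlying part (b) before differentiating: I must confirm that $(g \triangleright_H h) \triangleright' (g \triangleright_L l) = g \triangleright_L (h \triangleright' l)$ holds in the crossed 2-module, which requires chasing the definition $h \triangleright' l = l \cdot \{\delta(l)^{-1}, h\}$ through the $G$-equivariance of both $\delta$ (Definition \ref{3-1}(a)) and the Peiffer lifting (Definition \ref{3-1}(b)). Once that algebraic identity is in hand, the differentiation step is purely formal since all the actions involved are linear maps with $g, h$ frozen. I would therefore devote most of the written proof to that equivariance verification and treat the passage to Lie algebras as a one-line differentiation argument.
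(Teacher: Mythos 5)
Your proposal matches the paper's proof essentially verbatim: for (a) the paper likewise differentiates the group-level equivariance $g \triangleright_L \{h_1,h_2\} = \{g \triangleright_H h_1, g \triangleright_H h_2\}$ (Definition \ref{3-1}$(b)$) along curves through $e_H$ with tangents $v_1, v_2$, and for (b) it establishes exactly your group identity $(g \triangleright_H h) \triangleright' (g \triangleright_L l) = g \triangleright_L (h \triangleright' l)$ by chasing $h \triangleright' l = l \cdot \{\delta(l)^{-1}, h\}$ through the equivariance of $\delta$ and of the Peiffer lifting, then differentiates along a curve in $L$ with initial tangent $x$. Your parenthetical caution about the two-parameter/mixed-derivative bookkeeping in (a) is in fact handled more casually in the paper (which writes a single $\frac{d}{dt}\big\rvert_0$ and appeals to $\{-,-\}$ being ``a differential from Lie groups''), so your version is, if anything, slightly more careful on that point.
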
 
\begin{proof}
\begin{enumerate}
    \item[(a)] For $i=1,2$, let $\gamma_i$ be a smooth path in $H$ starting at $e_H$ with $v_i$ as its tangent vector. Note that $\{-,-\}: \mathfrak{h} \times \mathfrak{h} \rightarrow \mathfrak{l}$ is also a differential from Lie groups, then
    \begin{align}
        \{g \triangleright_H v_1, g \triangleright_H v_2 \} = & \{ \frac{d}{dt} \Big\rvert_0 \Big( g \triangleright_H \gamma_1 \big), \frac{d}{dt} \Big\rvert_0 \big( g \triangleright_H \gamma_2 \big) \} = \frac{d}{dt}\Big\rvert_0 \{ g \triangleright_H \gamma_1, g \triangleright_L \gamma_2 \} \notag \\
        = & \frac{d}{dt}\Big\rvert_0 g \triangleright_L \{ \gamma_1,  \gamma_2 \} = g \triangleright_L \{v_1, v_2\} 
    \end{align}
    
    \item[(b)] Let $\gamma$ be a path in $L$ with $x$ as its initial tangent vector, we have:
    \begin{align}
        & (g \triangleright_H h) \triangleright' (g \triangleright_L x) = (g \triangleright_H h) \triangleright' \frac{d}{dt} \Big\rvert_0 (g \triangleright \gamma) =  \frac{d}{dt} \Big\rvert_0 (g \, \triangleright_H h) \triangleright' (g \triangleright_L \gamma) \notag \\
        = & \frac{d}{dt} \Big\rvert_0 \Big( (g \triangleright_L \gamma) \cdot \{\delta(g \triangleright_L \gamma)^{-1}, g \triangleright_H h \} \Big) = \frac{d}{dt} \Big\rvert_0 \Big( (g \triangleright_L \gamma) \cdot \{g \triangleright_H \delta(\gamma^{-1}), g \triangleright_H h \} \Big) \notag \\
        = & \frac{d}{dt} \Big\rvert_0 \Big( g \triangleright_L \big( \gamma \cdot \{\delta(\gamma^{-1}), h\} \big) \Big ) = \frac{d}{dt} \Big\rvert_0 \big( g \triangleright_L (h \triangleright' \gamma) \big) = g \triangleright_L (h \triangleright' x), 
    \end{align}
    where we use the definition of $\triangleright'$ and equivariance of $\delta$.
\end{enumerate}
\end{proof}

\begin{proposition}\label{3-9}
The following properties hold for volume holonomy (symbol $\circ _r$ and $\circ _s$ are used to denote compositions of cubes in $r$ and $s$ directions):
\begin{enumerate}
    \item[(a)] For any cube $\Theta$, the volume holonomy $tra(\Theta)$ is well-defined up to action $\triangleright_L$: that is, if $\Tilde{\Theta}_1 = \Tilde{\Theta}_2 \cdot g$ are two standard lifts of $\Theta$, then $tra_1(\Theta) = g^{-1} \triangleright_L tra_2(\Theta)$.
    
    \item[(b)] Suppose two cubes $\Theta_1$ and $\Theta_2$ are smoothly compatible in $r$ direction. Let $\Tilde{\Theta}_1$ and $\Tilde{\Theta}_2$ be two standard lifts of these cubes such that $\Tilde{\Theta}_1(1,0,0) = \Tilde{\Theta}_2(0,0,0)$. Then, obviously, $\Tilde{\Theta}_2 \circ _r \Tilde{\Theta}_1$ is a standard lift of $\Theta_2 \circ _r \Theta_1$ and $tra(\Theta_2 \circ _r \Theta_1) = tra(\Theta_2) \cdot tra(\Theta_1)$ with respect to these lifts.
    
    \item[(c)] Suppose two cubes $\Theta_1$ and $\Theta_2$ are smoothly compatible in $s$ direction. Let $\Tilde{\Theta}_1$ and $\Tilde{\Theta}_2$ be two standard lifts such that $\Tilde{\Theta}_1(0,1,0) = \Tilde{\Theta}_2(0,0,0)$. Then, obviously, $\Tilde{\Theta}_2 \circ _s \Tilde{\Theta}_1$ is a standard lift of $\Theta_2 \circ _s \Theta_1$, and with respect to these lifts $tra(\Theta_2 \circ _s \Theta_1) = tra(\Theta_1) \cdot \big( tra(\Sigma)^{-1} \triangleright' tra(\Theta_2) \big)$, where $tra(\Sigma)$ is the surface holonomy on $\Tilde{\Sigma}(s,t) = \Tilde{\Theta}_1(0,s,t)$.
\end{enumerate}
\end{proposition}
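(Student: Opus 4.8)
The plan is to run the three arguments of Proposition \ref{2-8} one level up the crossed $2$-module, with the crossed module $(H,L,\delta,\triangleright')$, the $H$-valued data $h_r(s)$, and the $\mathfrak{l}$-valued integrand of Definition \ref{3-7} now playing the roles that $(G,H,\alpha,\tau)$, the connection $A$, and the fake curvature $B$ played there; the proof of Theorem \ref{3-6} already exhibits the volume holonomy as a surface-type holonomy of the reduced data on the $(r,s)$-square, with $r$ path-ordered and $s$ ordinary, and this is the engine behind all three parts. For part (a), write $\Tilde{\Theta}_1 = \Tilde{\Theta}_2\cdot g$ and record how each ingredient transforms under $R_g$: the $\triangleright_L$-equivariance of $C$ gives $\int_t\Tilde{\Theta}_1^\star C = g^{-1}\triangleright_L\int_t\Tilde{\Theta}_2^\star C$; the $\triangleright_H$-equivariance of $B$ together with Lemma \ref{3-8}(a) sends the Peiffer term to $g^{-1}\triangleright_L$ of itself; and Lemma \ref{2-7}(b) applied to $\triangleright_H$ gives $h^{(1)}_r(s) = g^{-1}\triangleright_H h^{(2)}_r(s)$. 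Substituting these and invoking Lemma \ref{3-8}(b) collapses the whole bracket to $g^{-1}\triangleright_L$ of the $\Tilde{\Theta}_2$-integrand; since $g^{-1}\triangleright_L$ is an automorphism of $L$, Lemma \ref{2-7}(b) (now for $\triangleright_L$) pulls it past the path-ordered exponential, yielding $tra_1(\Theta) = g^{-1}\triangleright_L tra_2(\Theta)$.

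Part (b) is the direct analogue of Proposition \ref{2-8}(b). Here $r$ is the outer path-ordered variable, and for fixed $r$ the integrand depends only on the $r$-slice of the cube and of its lift. Because standard lifts are determined by their origin (Remark after Definition \ref{2-5}) and $\Tilde{\Theta}_1(1,0,0) = \Tilde{\Theta}_2(0,0,0)$, the portion of the composite standard lift lying over $\Theta_2$ coincides with $\Tilde{\Theta}_2$, so the $r$-integrand of $\Theta_2\circ_r\Theta_1$ equals that of $\Theta_1$ on the first half and that of $\Theta_2$ on the second. The concatenation (cocycle) property of the $r$-ordered exponential then splits $tra(\Theta_2\circ_r\Theta_1)$ as $tra(\Theta_2)\cdot tra(\Theta_1)$.

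Part (c) is the crux, mirroring Proposition \ref{2-8}(c) one level up. The composition is in the \emph{inner} variable $s$, so in the reduced picture it is a horizontal composition of surface holonomies for $(H,L,\delta,\triangleright')$. For $s$ beyond the junction the cocycle property gives $h_r(s) = h^{(2)}_r(s)\cdot h^{(1)}_r(1)$, where $h^{(1)}_r(1) = tra(\Sigma_r)$ is the surface holonomy of the $r$-slice square of $\Theta_1$; meanwhile the bottom-face horizontality built into the standard lift (used exactly as in Theorem \ref{3-6} to force $A(\partial_r)$ and $A(\partial_s)$ to vanish at $t=0$) matches the $\Theta_2$-portion of the composite lift with the intrinsic $\Tilde{\Theta}_2$ up to a controlled $G$-factor. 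Splitting the integrand with the equivariance of $C$ and $B$ and Lemma \ref{3-8}, the $\Theta_2$-contribution is acted on by $\big(h^{(1)}_r(1)\big)^{-1}$ through $\triangleright'$; the crossed module relation $\delta(l)\triangleright' l' = Ad_l\, l'$ for $(H,L,\delta,\triangleright')$, together with the level-lifted analogue of the identity $\tau(h_1)=g$ of Proposition \ref{2-8}(c) (supplied by $\delta(C)=F_B$ and the $(H,L)$-analogue of Lemma \ref{2-7}(a)), turns this $\triangleright'$-action into a genuine conjugation in $L$, which is exactly what lets the $\Dot{h}=dR_h f$ cocycle argument close and deliver a product in $L$.

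The main obstacle is the bookkeeping that forces the $r$-dependent factor $h^{(1)}_r(1) = tra(\Sigma_r)$ to collapse, once the outer $r$-ordered exponential is assembled, to the single element $tra(\Sigma) = tra(\Sigma_0)$ attached to the $r=0$ face $\Tilde{\Sigma}(s,t) = \Tilde{\Theta}_1(0,s,t)$. This is the precise analogue of the reconciliation at the end of Section \ref{2}, where the horizontal-composition correction was realized by the single element $g = g_{1,0}(1)$ read off at the source edge $s=0$; here the source edge is $r=0$ and the transport along it is exactly $tra(\Sigma)$. Pinning down the lift conventions so that the correction is precisely $tra(\Sigma)^{-1}\triangleright'$ rather than some other transport of it, and verifying via the thin-homotopy conditions on the gluing faces (Definition \ref{2-3}) that no residual curvature survives, is where the care lies; granting these, the formula $tra(\Theta_2\circ_s\Theta_1) = tra(\Theta_1)\cdot\big(tra(\Sigma)^{-1}\triangleright' tra(\Theta_2)\big)$ follows.
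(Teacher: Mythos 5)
Your proposal is correct and follows essentially the same route as the paper's proof: part (a) via equivariance of $B$ and $C$ together with Lemma \ref{3-8} and Lemma \ref{2-7}(b), part (b) via the cocycle property of the ordered exponential, and part (c) via the cocycle split $h_{12,r}(s)^{-1} = h_{1,r}(1)^{-1}\cdot h_{2,r}(s)^{-1}$, the identity $h_{1,r}(1)^{-1} = \delta(l_1(r))\cdot tra(\Sigma)^{-1}$ supplied by Proposition \ref{3-3} (your ``level-lifted $\tau(h_1)=g$''), the crossed-module relation $\delta(l)\triangleright' = Ad_l$ in $(H,L,\delta,\triangleright')$, and the product-ODE closing argument of Proposition \ref{2-8}(c). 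One minor remark: your hedge ``up to a controlled $G$-factor'' is unnecessary, since the thin-homotopy condition on the bottom face forces $A(\partial_r)=0$ along $t=0$, so the seam edges $\Tilde{\Theta}_1(r,1,0)$ and $\Tilde{\Theta}_2(r,0,0)$ agree exactly and the composite lift is standard with no correction.
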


\begin{proof}
\begin{enumerate}
    \item[(a)] For $i=1,2$, let us first denote the pullback forms $\Tilde{\Theta}_i^\star B$ and $\Tilde{\Theta}_i^\star C$ by $B^i$ and $C^i$ respectively. The equivariance of $B$ and $C$ and Lemma \ref{3-8}$(a)$ indicate that
    \begin{align}
        & \int_t C^1_{rst} = g^{-1} \triangleright_L \int_t C^2_{rst}, \\ 
        & \{\int_t B^1_{rt}, \int_t B^1_{st}\} = \{ \int_t g^{-1} \triangleright_H B^2)_{rt}, \int_t g^{-1} \triangleright_H B^2_{st}\} = g^{-1} \triangleright_L \{ \int_t B^2_{rt}, \int_t B^2_{st}\}. 
    \end{align}
    Let $h_{r,i}(s)$ denote $\mathcal{P} \exp{\int_s \int_t B^i_r(\partial_s, \partial_t)}$ for $i= 1$ or $2$. Then obviously $h_{r,1}(s) = g^{-1} \triangleright_H h_{r,2}(s)$. Substituting these identities into the volume holonomy formula and using Lemma \ref{3-8}$(b)$, we have:
    \begin{align}
        h_{r,1}(s) \triangleright' \big( \int_t C^1_{rst} - \{ \int_t B^1_{rt}, \int_t B^1_{st} \} \big) = g^{-1} \triangleright_L \Big( h_{r,2}(s) \triangleright' \big( \int_t C^2_{rst} - \{ \int_t B^2_{rt}, B^2_{st} \} \big) \Big). 
    \end{align}
    Thus the identity $tra_1(\Theta) = g^{-1} \triangleright_L tra_2(\Theta)$ holds by Lemma \ref{2-7}$(b)$.
    
    \item[(b)] The second property is still due to the cocycle condition of solutions to $dR_g f = \Dot{g}$.
    
    \item[(c)] In the following computation, we still use the above convention to denote the involved differential forms, for instance, by $C^{12}$ and $B^{12}$. We also abbreviate $\int_t C^{12}_{rst} - \{ \int_t B^{12}_{rt}, \int_t B^{12}_{st} \}$ to $L^{12}(s)$ for simplicity. Let $l_{12}(r)$ denote the path-ordered exponential solved from $f_{12}(r) \vcentcolon = \int_s h_{r,12}(s)^{-1} \triangleright' L^{12}(s)$ with $l_1(r)$ and $l_2(r)$ defined similarly. Using the vertical composition formula for surface holonomy (see Proposition \ref{2-8}$(b)$), we have: 
    \begin{align}
        & \int_s h_{12,r}(s)^{-1} \triangleright' L^{12}(s) = \int_s h_{1,r}(s)^{-1} \triangleright' L^1(s)  + \int_s  h_{1,r}(1)^{-1} \cdot h_{2,r}(s)^{-1} \triangleright' L^2(s). 
    \end{align}
    By Proposition \ref{3-3} and definition of volume holonomy, $h_{1,r}(1)^{-1} = \delta(l_1(r)) \cdot h_{1,r=0}(1)^{-1}$. We also note that, for any crossed 2-module $(G,H,L,\delta,\partial,\triangleright_H,\triangleright_L, \{-,-\})$ the sub-structure $(H,L,\delta,\triangleright')$ is a crossed module. Thus the above identity equals
    \begin{align}
        & \int_s \big( h_{1,r}(s)^{-1} \triangleright' L^1(s) \big) + \int_s \Big( \big( \delta( l_1(r)) \cdot h_{1,r=0}(1)^{-1} \cdot h_{2,r}(s)^{-1} \big) \triangleright' L^2(s) \Big) \notag \\
        & = f_1(r) + Ad_{l_1(r)} \big( tra(\Sigma)^{-1} \triangleright' f_2(r) \big). 
    \end{align}
    Let $l_2'(r)$ denote the path-ordered exponential of $tra(\Sigma)^{-1} \triangleright' f_2(r)$. Following a similar method used in Proposition \ref{2-8}$(c)$, we have: 
    \begin{align}
        & dR_{l_1 \cdot l'_2} f_{12}(r) = dR_{l_1 \cdot l'_2} \Big( f_1(r) + Ad_{l_1(r)} \big( tra(\Sigma)^{-1} \triangleright' f_2(r) \big) \Big) \notag \\
        = & dR_{l'_2} \Dot{l}_1 + dL_{l_1} dR_{l'_2} \big( tra(\Sigma_0)^{-1} \triangleright' f_2(r) \big) = dR_{l'_2} \Dot{l}_1 + dL_{l_1} \Dot{l}'_2 = \frac{d}{dt} (l_1 \cdot l'_2). 
    \end{align}
    Since $dR_{l_{12}} f_{12}(r) = \Dot{l}_{12}$, the uniqueness of solutions indicates that $tra(\Theta_2 \circ _s \Theta_1)  = tra(\Theta_1) \cdot \big( tra(\Sigma)^{-1} \triangleright' tra(\Theta_2) \big)$.
\end{enumerate}
\end{proof}

The situation of composition in $t$ direction is a bit more involved for the corresponding analysis starts with integrals on variable $t$, and we do not discuss it here.

\begin{definition}[Parallel Transport on Cubes]\label{3-10}
Let $\Theta$ be any cube. After all these discussions, the \textbf{parallel transport} on $\Theta$ is defined as the pair $\big( \Tilde{\Theta},tra(\Theta) \big)$ of a standard lift $\Tilde{\Theta}$ and volume holonomy $tra(\Theta)$ evaluated on the lift. One can also consider the equivalence class of $tra(\Theta)$ under $G$-action $\triangleright_L$. Parallel transport on volumes is a gauge invariant.
\end{definition}


\bibliographystyle{alpha}
\bibliography{Bibliography}

\end{document}